\newtheorem{theorem}{Theorem}
\newtheorem{corollary}{Corollary}
\newtheorem{lemma}{Lemma}
\theoremstyle{definition}
\newcommand{\R}{\mathbb{R}}
\newcommand{\dd}{\mathop{}\!\mathrm{d}}
\newcommand{\set}[1]{\left\{#1\right\}}
\newcommand{\pd}{\partial}
\newcommand{\uS}{\mathbb{S}^{n-1}}
\newcommand{\OM}{Orlicz-Minkowski }
\newcommand{\beq}{\begin{equation}}
\newcommand{\eeq}{\end{equation}}
\newcommand{\beqs}{\begin{eqnarray*}}
\newcommand{\eeqs}{\end{eqnarray*}}
\newcommand{\beqn}{\begin{eqnarray}}
\newcommand{\eeqn}{\end{eqnarray}}
\begin{document}

\title{Deforming a convex hypersurface by anisotropic curvature flows}

\author{HongJie Ju}
\address{HongJie Ju: School of Science,Beijing University of Posts and Telecommunications, Beijing 100876, P.R. China}
\email{hjju@bupt.edu.cn}

\author{BoYa Li}

\address{BoYa Li: School of Mathematics and Statistics, Beijing Technology and Business University, Beijing 100048, P.R. China}

\author{ YanNan Liu}

\address{YanNan Liu: School of Mathematics and Statistics, Beijing Technology and Business University, Beijing 100048, P.R. China}
\email{liuyn@th.btbu.edu.cn}

\thanks{This work was supported by Natural Science Foundation of China (11871432, 11871102) and Beijing Natural Science Foundation (1172005).}

\date{}

\begin{abstract}
In this paper,  we consider a fully nonlinear curvature flow of a convex hypersurface in the
Euclidean $n$-space. This flow  involves $k$-th  elementary symmetric function for principal curvature radii and a function of
support function.
Under some appropriate assumptions, we prove the long-time existence and
convergence of this flow.
As an application, we give the existence  of smooth solutions to  the Orlicz-Christoffel-Minkowski problem.
\end{abstract}

\keywords{
$L_{p}$-Christoffel-Minkowski problem,
Existence of solutions,
Anisotropic curvature flow.
}

\subjclass[2010]{
35J96, 35J75, 53A15, 53A07.
}


\maketitle
\vskip4ex

\section{Introduction}
Let $M_{0}$ be a smooth, closed, strictly convex hypersurface in the Euclidean
space $\R^{n}$, which encloses the origin and is given by a smooth embedding
$X_{0}: \mathbb{S}^{n-1} \rightarrow \R^{n}$.
Consider a family of closed hypersurfaces $\set{M_t}$ with $M_t=X(\uS,t)$, where $X:
\mathbb{S}^{n-1}\times[0,T) \rightarrow \R^{n}$ is a smooth map satisfying the
following initial value problem:
\begin{equation}\label{feq}
  \begin{split}
    \frac{\pd X}{\pd t} (x,t)
    &= \frac{1}{f(\nu)}\sigma_{k}(x,t) \varphi(\langle X, \nu \rangle)\langle X, \nu \rangle \eta(t) \nu - X,\\
    X(x,0) &= X_{0}(x).
  \end{split}
\end{equation}
Here $f$ is a given positive and smooth function on the unit sphere $\uS$, $\nu$ is the unit outer
normal vector of  $M_t$ at the point $X(x,t)$.  $\langle \cdot,\cdot \rangle$ is the
standard inner product in $\R^n$, $\varphi$ is a positive smooth function
defined in $(0,+\infty)$, $\eta$ is a scalar function to be specified later, and $T$ is the maximal time
for which the solution exists. We use $\{e_{ij}\}, 1 \leq i, j \leq n-1$ and $\nabla$ for the standard metric and Levi-Civita connection of  $\uS$ respectively. Principal radii
of curvature are the eigenvalues of the matrix
\beqs b_{ij} := \nabla_{i}\nabla_{j}h  + e_{ij}h \eeqs
with respect to $\{e_{ij}\}$.
$\sigma_{k}(x,t)$ is the
$k$-th  elementary symmetric function for principal curvature radii of $M_{t}$ at $X(x,t)$
and $k$ is an integer with $1\leq k \leq n-1$. In this paper,
$\sigma_{k}$ is normalized so that $\sigma_{k} (1, \ldots, 1) = 1$.

Geometric flows with speed of symmetric polynomial of the principal curvature radii of the hypersurface have been  extensively studied, see e.g. \cite{Urbas1991,ChowTsai1997,Gerhardt2014,Xia2016}.

On the other hand,  anisotropic curvature flows provide
alternative methods to prove the existences of elliptic PDEs arising from  convex geometry,  see e.g. \cite{BIS2019,Chen&Huang&Zhao2018,ChenLi,ChouWang2000,Ivaki2019,LiShengWang2020,LiuLuTrans,LiuLu2020,ShengYi2020}.

A positive homothetic self-similar solution of \eqref{feq}, if exists, is a solution to
 the following fully nonlinear equation
\begin{equation} \label{eq}
  c\, \varphi(h) \sigma_{k}(x) =f(x) \text{ on } \uS
\end{equation}
for some positive constant $c$.
Here $h$ is the support function defined on $\uS$. We are concerned the existence of smooth solutions for equation \eqref{eq}.

When $k = n-1$, equation \eqref{eq} is just the smooth case of \OM problem. The \OM problem is a basic problem in the Orlicz-Brunn-Minkowski theory in
convex geometry. It is  a generalization of the classical Minkowski problem which
asks what are the necessary and sufficient conditions for
a Borel measure on the unit sphere $\uS$ to be a multiple of the Orlicz surface area
measure of a convex body in $\R^n$. In \cite{HLYZ2010},  Haberl,
Lutwak, Yang \& Zhang studied the even case of the \OM problem. After that,
the \OM problem  attracted great attention from many scholars,
see for example \cite{GHWXY2019,GHWXY2020,HuangActa2016,JianLu2019,SunLong2015,XiJinLeng2014,ZouXiong2014}.

When $\varphi(s)=s^{1-p}, k=n-1$, Eq. \eqref{eq} reduces to the $L_p$-Minkowski
problem, which has been extensively studied, see e.g. \cite{BLYZ2013, ChouWang2006,YeLiWang2016,HuangLiuXu2015,HLYZ2005,JianLuWang2015,JianLuZhu2016,Lu2018,LuWang2013,Lutwak1993,Zhu2014}.

When $ 1\leq k< n-1$, Eq. \eqref{eq} is  so-called the Orlicz-Christoffel-Minkowski problem.
For $\varphi(s) = s^{1-p}, 1\leq k< n-1$, Eq. \eqref{eq}
is known as the $L_{p}$-Christoffel-Minkowski problem and is the  classical Christoffel-Minkowski problem for $p=1$.   Under a sufficient condition on the prescribed function, existence of solution for the classical Christoffel-Minkowski problem was given  in \cite{GuanMa2003}.

The $L_{p}$-Christoffel-Minkowski problem is related to the problem of prescribing $k$-th $p$-area measures. Hu, Ma \& Shen in  \cite{HMS2004} proved the existence of convex solutions to the $L_{p}$-Christoffel-Minkowski problem  for $ p \geq k+1$ under appropriate conditions. Using the methods of geometric flows, Ivaki in \cite{Ivaki2019} and then  Sheng \& Yi in \cite{ShengYi2020} also gave  the existence of smooth convex solutions to the $L_{p}$-Christoffel-Minkowski problem  for $ p \geq k+1$.
In case $1 <  p < k+1$, Guan \& Xia in  \cite{GuanXia2018} established the existence of convex body with prescribed $k$-th even $p$-area  measures.

In this paper, we study the long-time existence and convergence of flow \eqref{feq} for strictly convex hypersurfaces and   the
existence of smooth solutions to   the Orlicz-Christoffel-Minkowski problem \eqref{eq}.

The scalar function $\eta(t)$ in \eqref{feq} is usually used to  keep $M_t$ normalized in a certain sense, see for examples \cite{Chen&Huang&Zhao2018,Ivaki2019,ShengYi2020}.
In this paper, $\eta$ is  given by
\begin{equation}\label{eta}
  \eta(t) = \frac{\int_{\uS}hf(x)/\varphi(h)\dd x}{\int_{\uS}h\sigma_{k}\dd x},
\end{equation}
where $h(\cdot,t)$ is the support
function of the convex hypersurface $M_t$. It will be proved in section 2 that  $\int_{\uS}h\sigma_{k}\dd x$ is
non-decreasing along the flow under this choice of $\eta$.

To obtain the long-time existence of flow \eqref{feq}, we need some constraints on $\varphi$.

${\rm \bf (A)}$: $\varphi(s)$ is a  positive and continuous function defined in $(0,+\infty)$  such that  $\varphi > \alpha s^{-k-\varepsilon}$ for some positive constants $\varepsilon$ and $\alpha$ for  $s$ near $0$ and $\phi(s) = \int^{s}_{0} \frac{1}{\varphi(\tau)}d\tau$ is unbounded as $s \rightarrow +\infty$. Here $k$  is the order of $\sigma_{k}$.

The main results of this paper are stated as  follows.

\begin{theorem} \label{thm1}
  Assume $M_{0}$ is a smooth, closed and strictly convex hypersurface in $\R^{n}$. Suppose $k$ is an integer with $1 \leq k < n-1$ and   $\varphi\in
  C^\infty(0,+\infty)$ satisfying {\bf (A)}. Moreover, for any $s > 0$,
  \beqs
 \frac{\pd}{\pd s}\left(s\frac{\pd}{\pd s}\left(\log\varphi(s)\right) \right) \geq 0 \quad\text{and } \quad -a \leq s\frac{\pd}{\pd s}\left(\log\varphi(s)\right)  \leq -1, \eeqs
where  $a$ is a positive constant.
Suppose $f$ is a smooth function on $\uS$ such that $$(k  + 1)f^{-\frac{1}{k + a}}e_{ij} + (k + a)\nabla_{i}\nabla_{j}(f^{-\frac{1}{k + a}})$$ is positive definite.
 Then flow \eqref{feq}
  has a unique smooth solution $M_{t}$ for all time $t > 0$.
  Moreover, when $t\to\infty$, a subsequence of $M_{t}$ converges in $C^{\infty}$ to a smooth,
  closed,  strictly convex hypersurface, whose support
  function is a smooth  solution to equation \eqref{eq} for some positive
  constant $c$.
\end{theorem}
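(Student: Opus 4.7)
The strategy is to recast \eqref{feq} as a scalar parabolic equation for the support function $h(\cdot,t)$ of $M_t$ on $\uS$, namely
\beq \label{flow-h-plan}
\pd_t h = \eta(t)\,\frac{\varphi(h)\,h}{f(x)}\,\sigma_k(b_{ij}) - h, \qquad b_{ij}=\nabla_i\nabla_j h + e_{ij}h,
\eeq
and to establish uniform a priori estimates on $h$ in every $C^m$ norm, so that short-time existence extends to all time and a subsequential limit yields a solution of \eqref{eq}. Since $M_0$ is smooth and strictly convex, $b_{ij}(\cdot,0)>0$ and \eqref{flow-h-plan} is strictly parabolic, giving short-time existence by standard theory.

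I would then exploit two global quantities. The choice \eqref{eta} of $\eta(t)$ is designed precisely to make $Q(t):=\int_{\uS} h\sigma_k\,\dd x$ non-decreasing, as promised in Section~2. A direct computation with $\phi(s)=\int_0^{s}\dd\tau/\varphi(\tau)$ shows that the entropy $\mathcal{E}(t):=\int_{\uS}\phi(h)f\,\dd x$ has derivative
\beqs
\frac{d\mathcal{E}}{dt} \;=\; \int_{\uS}\frac{f}{\varphi(h)}\,\pd_t h\,\dd x \;=\; \eta(t)\,Q(t) - \int_{\uS}\frac{hf}{\varphi(h)}\,\dd x \;=\; 0
\eeqs
by \eqref{eta}, so $\mathcal{E}$ is conserved. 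Because $\phi(s)\to\infty$ by assumption~(A), this gives an upper bound on $\max h$; combining the lower bound $\varphi>\alpha s^{-k-\varepsilon}$ near $0$ with the monotonicity of $Q$ then keeps $h$ uniformly bounded below. The $C^1$ bound is automatic from $|X|^2=h^2+|\nabla h|^2$ and the resulting diameter bound.

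The main difficulty, and the step I expect to be the principal obstacle, is the two-sided $C^2$ estimate on $b_{ij}$. The condition $-a\le s\pd_s\log\varphi\le -1$ plays the role of an $L_p$-exponent with $p$ effectively confined to $[1,a+1]$, and $\pd_s\bigl(s\pd_s\log\varphi\bigr)\ge 0$ makes the second-order $\varphi$-terms enter any maximum-principle computation with favorable signs. The upper bound on the largest principal radius $\lambda_{\max}(b_{ij})$ I would obtain by applying the parabolic maximum principle to an auxiliary function of the form $W = \log\lambda_{\max}(b_{ij}) + \Psi(h) + \tfrac{B}{2}|\nabla h|^2$, with $\Psi$ and $B$ chosen so that the bad terms are absorbed using the two structural inequalities on $\varphi$. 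Preservation of strict convexity (hence a lower bound on the smallest principal radius, combined with an easy upper bound on $\sigma_k$ coming from the flow equation) is the most delicate point: the hypothesis that $(k+1)f^{-1/(k+a)}e_{ij}+(k+a)\nabla_i\nabla_j(f^{-1/(k+a)})$ is positive definite is precisely the Guan--Ma-type sufficient condition, used via a constant-rank theorem (or an equivalent maximum-principle argument on an auxiliary tensor built from $f^{-1/(k+a)}$ and $b_{ij}$) to ensure that $b_{ij}>0$ is propagated along \eqref{flow-h-plan}.

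Once $C^{-1}\le b_{ij}\le C$ is in hand, \eqref{flow-h-plan} becomes uniformly parabolic and concave in the $\sigma_k^{1/k}$-sense, so Krylov--Safonov and Schauder bootstrap yield uniform $C^\infty$ estimates, whence $T=+\infty$. For the convergence, conservation of $\mathcal{E}$ together with the monotonicity and boundedness of $Q(t)$ force $Q'(t_j)\to 0$ along some sequence $t_j\to\infty$; the explicit formula for $Q'$ derived in Section~2 shows $Q'\ge 0$ with equality iff $\eta\,\varphi(h)\,h\,\sigma_k/f\equiv h$, i.e.\ iff $\pd_t h\equiv 0$. Extracting a $C^\infty$ subsequential limit then produces a smooth, closed, strictly convex hypersurface $M_\infty$ whose support function $h_\infty$ satisfies the self-similar equation \eqref{eq} with $c=1/\eta_\infty$, completing the proof.
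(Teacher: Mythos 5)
Your overall architecture matches the paper's: the conserved entropy $\mathcal{E}(t)=\int_{\uS}\phi(h)f\,\dd x$ is the paper's $J(t)$, the monotone quantity $Q(t)=\int_{\uS}h\sigma_k\,\dd x$ is the paper's $V(t)$, the $C^0$ bound comes from conservation of $\mathcal{E}$ together with unboundedness of $\phi$ and the growth condition on $\varphi$ near zero, the $C^1$ bound is $\rho^2=h^2+|\nabla h|^2$, Krylov--Safonov plus Schauder closes the bootstrap, and convergence comes from $V'(t_j)\to0$ forcing equality in H\"older. All of that is the same skeleton the paper uses.

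Where your sketch diverges from the paper, and where I think it is unlikely to close as stated, is the $C^2$ estimate. You propose to bound $\lambda_{\max}(b_{ij})$ from above directly via a test function $W=\log\lambda_{\max}(b_{ij})+\Psi(h)+\tfrac{B}{2}|\nabla h|^2$, and you describe the $\sigma_k$ upper bound as an ``easy'' consequence of the flow equation. Neither is right. Because $1\le k<n-1$, a pointwise bound on the speed $\sigma_k$ does not control $\lambda_{\max}$ without first controlling $\lambda_{\min}$, and the $\sigma_k$ upper bound itself is a Pogorelov-type estimate rather than a triviality. The paper's route is: (i) two-sided bounds on $\sigma_k$, using the auxiliary functions $\log(N\sigma_k)-A\rho^2/2$ for the lower bound and $P=\varphi\sigma_k/\bigl(f(1-\tfrac{B}{2}\rho^2)\bigr)$ for the upper bound, the latter crucially invoking inverse concavity of $\sigma_k^{1/k}$ (so $\sigma_k^{ij}b_{im}b_{jm}\ge k\sigma_k^{1+1/k}$); (ii) convexity preservation, i.e.\ $\kappa_i\le C$, by applying the maximum principle to $b^{11}/h$ in an Urbas-style computation, \emph{not} by a constant-rank theorem --- this is exactly where the structural hypotheses $-a\le s\,\pd_s\log\varphi\le-1$, $\pd_s(s\,\pd_s\log\varphi)\ge0$ and the positive definiteness of $(k+1)f^{-\frac{1}{k+a}}e_{ij}+(k+a)\nabla_i\nabla_j(f^{-\frac{1}{k+a}})$ enter, after the identity $\nabla_{11}N-\tfrac{k}{k+1}\tfrac{(\nabla_1 N)^2}{N}+(k+1)N=(k+1)N^{k/(k+1)}\bigl(N^{1/(k+1)}+(N^{1/(k+1)})_{\tau\tau}\bigr)$ is expanded; and (iii) the upper bound on $\lambda_{\max}$ then follows indirectly, since $\sigma_k\le C$ and $\lambda_i\ge c$ give $\lambda_{\max}c^{k-1}\le\sigma_k\le C$. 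So you correctly flagged that convexity preservation is the delicate step and that the Guan--Ma-type hypothesis on $f$ is what makes it work, but you should drop the constant-rank language, drop the $W$-test-function on $\lambda_{\max}$, and recognise that the $\sigma_k$ upper bound is a genuine lemma, not a throwaway observation.
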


When $f \equiv 1$, we have the following result.
\begin{theorem} \label{thm2}
  Assume $M_{0}$ is a smooth, closed and strictly convex hypersurface in $\R^{n}$.
  If $f \equiv 1$,  $\varphi\in
  C^\infty(0,+\infty)$ satisfying {\bf (A)}, and $k$ is an integer with $1 \leq k < n-1$. Moreover, for any $s > 0$,
   \beqs
 \frac{\pd}{\pd s}\left(s\frac{\pd}{\pd s}(\log\varphi(s)) \right) \geq 0 \quad\text{and } \quad s\frac{\pd}{\pd s}(\log\varphi(s))  \leq -1. \eeqs
 Then flow \eqref{feq}
  has a unique smooth solution $M_{t}$ for all time $t > 0$.
  Moreover, when $t\to\infty$, a subsequence of $M_{t}$ converges in $C^{\infty}$ to a smooth,
  closed,  strictly convex hypersurface, whose support
  function is a smooth solution to equation \eqref{eq} for some positive
  constant $c$.
\end{theorem}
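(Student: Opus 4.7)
The proof would follow the same blueprint as Theorem 1, using that the hypothesis $f\equiv 1$ yields two simplifications: the matrix condition on $f$ in Theorem 1 holds trivially, since $(k+1)f^{-1/(k+a)}e_{ij}+(k+a)\nabla_{i}\nabla_{j}(f^{-1/(k+a)})$ reduces to $(k+1)e_{ij}$; and the evolution identities lose all gradient-in-$f$ terms. This is what allows the lower bound $-a\le s\partial_s\log\varphi$ required in Theorem 1 to be dropped here.

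First I would parametrize $M_t$ by its support function $h(x,t)=\langle X,\nu\rangle$ and convert the vectorial flow \eqref{feq} into the scalar parabolic equation
\[
  \partial_t h(x,t)=\eta(t)\,\sigma_{k}(b_{ij})\,\varphi(h)\,h-h
\]
on $\uS\times[0,T^*)$, where $b_{ij}=\nabla_{i}\nabla_{j}h+e_{ij}h>0$ by the strict convexity of $M_0$. Standard parabolic theory gives short-time existence and preservation of strict convexity. For uniform $C^0$ bounds $0<c_1\le h(\cdot,t)\le c_2$, condition \textbf{(A)} is exactly what is needed: the estimate $\varphi>\alpha s^{-k-\varepsilon}$ near $0$ makes the speed blow up if $\inf h\to 0$ and so prevents collapse, while the unboundedness of $\phi$ at $+\infty$ rules out blow-up. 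A $C^1$ bound on $h$ is then automatic from convexity combined with the $C^0$ bound.

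The principal obstacle is the $C^2$ estimate, i.e.\ uniform two-sided positive bounds on the principal radii $\lambda_i$ of $M_t$. The lower bound on $\lambda_i$ I would obtain by applying the parabolic maximum principle to an auxiliary quantity like $\log\det b_{ij}-A\log h$, using $s\partial_s\log\varphi\le -1$ to absorb unfavorable zeroth-order terms and using the convexity $\partial_s(s\partial_s\log\varphi)\ge 0$ to supply the correct sign for the second-order contribution coming from $\varphi(h)$. The upper bound on $\lambda_i$ I would attack with a Guan--Ma type test function on the largest eigenvalue of $b_{ij}$; the positive-definiteness condition that computation needs is exactly the matrix hypothesis on $f$ in Theorem 1, which is trivially satisfied here because $f\equiv 1$, and this is precisely the step where the lower bound on $s\partial_s\log\varphi$ becomes unnecessary. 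Once $C^2$ is controlled, Krylov--Safonov together with Schauder theory upgrade the estimates to uniform $C^\infty$ bounds, giving $T^*=\infty$.

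For convergence I would use the monotone functional $Q(t)=\int_{\uS}h\sigma_{k}\,dx$, whose non-decrease is established in Section~2. A direct computation using the self-adjointness of $\sigma_{k}^{ij}$ and the homogeneity identity $\sigma_{k}^{ij}b_{ij}=k\sigma_{k}$ yields
\[
  Q'(t)=(k+1)\Bigl(\eta(t)\int_{\uS}\sigma_{k}^{2}\varphi(h)h\,dx-\int_{\uS}h\sigma_{k}\,dx\Bigr),
\]
and the definition \eqref{eta} of $\eta$ together with Cauchy--Schwarz shows $Q'\ge 0$, with equality iff $\eta\,\varphi(h)\,\sigma_{k}\equiv 1$ pointwise on $\uS$. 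Since $Q$ is bounded by the $C^0$ and $C^2$ estimates, $Q'(t_j)\to 0$ along some $t_j\to\infty$; combined with the uniform $C^\infty$ bounds, a subsequence of $M_{t_j}$ converges smoothly to a strictly convex limit $M_\infty$ whose support function satisfies \eqref{eq} with $c=\lim\eta(t_j)$.
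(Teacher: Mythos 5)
Your overall architecture matches the paper's: reduce the flow to the scalar parabolic equation for $h$, establish $C^0$, $C^1$, and $C^2$ estimates, apply Krylov--Safonov and Schauder theory, and use the monotone functional $V(t)=\int_{\uS}h\sigma_k\,dx$ for convergence. Your diagnosis of \emph{why} the hypotheses simplify when $f\equiv 1$ is also right: the matrix condition on $f$ in Theorem \ref{thm1} degenerates to $(k+1)e_{ij}>0$, and once all gradient-in-$f$ terms drop out of the key second-order computation the lower bound $-a\le s\partial_s\log\varphi$ becomes unnecessary, exactly as the paper notes in passing from Lemma \ref{C2-estimate-2} to Corollary \ref{f=1}. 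The $C^0$, $C^1$, and convergence discussions are essentially the paper's.

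The gap is in your account of the $C^2$ estimate, where you have the two estimates pointing in the wrong directions. The Guan--Ma type maximum-principle argument in the paper (Lemma \ref{C2-estimate-2}) bounds $b^{11}/h$ from above, where $b^{11}$ is the largest eigenvalue of the \emph{inverse} matrix $b^{ij}$; that is, it bounds $\kappa_{\max}$ from above, equivalently $\lambda_{\min}$ from below. This is the convexity-preservation step, and it is precisely here --- not in any determinant-type estimate --- that both the matrix condition on $f$ \emph{and} the two structural conditions $s\partial_s\log\varphi\le -1$ and $\partial_s(s\partial_s\log\varphi)\ge 0$ are invoked, to show that $N^{1/(k+1)}$ satisfies the Guan--Ma inequality along the flow. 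You have assigned the $\varphi$-conditions to a $\log\det b_{ij}-A\log h$ estimate for the lower bound on $\lambda_i$, and the $f$-condition to a Guan--Ma estimate for the \emph{upper} bound on $\lambda_i$; both assignments are reversed. Moreover, for $1\le k<n-1$ the relevant operator is $\sigma_k$, not $\det b=\sigma_{n-1}$, and in any case a lower bound on $\det b_{ij}$ or on $\sigma_k$ alone cannot yield a lower bound on $\lambda_{\min}$ without a prior upper bound on $\lambda_{\max}$, so the order of your two sub-steps is circular. The paper's actual route is: (i) bound $\sigma_k$ from above and below via the auxiliary functions $\log(N\sigma_k)-A\rho^2/2$ and $\varphi\sigma_k/\bigl(f(1-B\rho^2/2)\bigr)$, neither of which needs the structural conditions beyond $\varphi$ decreasing; (ii) bound $\lambda_{\min}$ from below by the Guan--Ma argument, which does; (iii) deduce the upper bound on $\lambda_{\max}$ elementarily from the $\sigma_k$ upper bound together with the $\lambda_{\min}$ lower bound, with no further test function required.
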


  As an application, we have
\begin{corollary} \label{coreq}
  Under the assumptions of Theorem \ref{thm1} or Theorem \ref{thm2},
  there exists a smooth  solution to equation \eqref{eq} for some
  positive constant $c$.
\end{corollary}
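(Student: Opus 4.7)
The plan is to derive the corollary directly from Theorem~\ref{thm1} and Theorem~\ref{thm2}; the existence of a smooth solution to \eqref{eq} is built into the convergence statement of each theorem, so the work is purely to check that the hypotheses of one of these theorems can be activated.

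First, I would select an admissible initial hypersurface $M_0$. Since the statements of Theorem~\ref{thm1} and Theorem~\ref{thm2} only require that $M_0$ be a smooth, closed, strictly convex hypersurface in $\R^n$ enclosing the origin, the most transparent choice is a round sphere $M_0 = \{x\in\R^n : |x| = R\}$ for some $R>0$ (any $R$ will do; one may choose $R$ to ensure, for instance, that $\langle X_0, \nu\rangle > 0$, which is automatic for a sphere centered at the origin). This hypersurface satisfies all the regularity, closedness, and strict convexity hypotheses, and the constraints on $\varphi$ and $f$ in the hypotheses of the theorems are assumptions on the data, not on $M_0$, so they are automatically in force once the corollary's hypotheses are assumed.

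Next, I would invoke the appropriate theorem. Under the hypotheses of Theorem~\ref{thm1}, the flow \eqref{feq} starting from $M_0$ admits a unique smooth solution $M_t$ for all $t>0$, and a subsequence $M_{t_j}$ converges in $C^\infty$ to a smooth, closed, strictly convex hypersurface $M_\infty$ whose support function $h_\infty$ solves
\[
 c\,\varphi(h_\infty)\,\sigma_k(x) = f(x) \quad\text{on } \uS
\]
for some positive constant $c$. In the special case $f\equiv 1$, the same conclusion follows from Theorem~\ref{thm2}. In either case, $h_\infty$ is precisely a smooth solution to \eqref{eq}, which is the statement of the corollary.

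There is no real obstacle here: the nontrivial work is concentrated entirely in Theorem~\ref{thm1} and Theorem~\ref{thm2} (uniform $C^0$, $C^1$, and $C^2$ estimates along the flow, preservation of strict convexity, monotonicity of $\int_{\uS} h\sigma_k\,\dd x$ under the normalization \eqref{eta}, and the Krylov--Safonov/Evans type higher regularity used to pass to the $C^\infty$ limit). Once those theorems are available, the corollary is a one-line consequence. The only thing one might wish to remark on explicitly is that the constant $c$ produced is genuinely positive, which follows from the positivity of $\varphi$, $\sigma_k$, and $f$, together with strict convexity of $M_\infty$ ensuring $h_\infty>0$ after a suitable translation (or, in the setup here, from the origin-enclosing property preserved by the flow).
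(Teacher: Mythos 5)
Your proposal is correct and matches the paper's (implicit) argument: the corollary is stated as an immediate consequence of Theorems~\ref{thm1} and~\ref{thm2}, obtained by choosing any admissible initial hypersurface (such as a round sphere centered at the origin) and reading off the existence of a smooth solution to \eqref{eq} from the convergence statement. The paper does not write out a separate proof, precisely because the argument is the one-line deduction you describe.
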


From the proof of Lemma \ref{C2-estimate-2} in section 3, we will see if $\dfrac{\varphi'(s)s}{\varphi(s)} = a_{0}$ for some negative constant $a_{0}$, then convexity condition on $f$ reduces to  $f^{-\frac{1}{k - a_{0}}}e_{ij} + \nabla_{j}\nabla_{j}(f^{-\frac{1}{k - a_{0}}})$ being positive definite.
Hence when $\varphi(s) = s^{1-p}$ for $p \geq k + 1 $ with above condition on $f$, our conclusion recovers the existence results to the $L_{p}$-Christoffel-Minkowski problem which have been obtained in  \cite{HMS2004},\cite{Ivaki2019} and \cite{ShengYi2020}.

This paper is organized as follows.
In section 2, we give some basic knowledge about the flow \eqref{feq} and evolution equations of some geometric quantities.
In section 3, the long-time existence of flow \eqref{feq} will be obtained.
First, under assumption {\bf (A)},  uniform positive
upper and lower bounds for support functions of $\set{M_t}$ are derived.
Based on the bounds of support functions, we obtain the uniform bounds of  principal curvatures
by constructing proper auxiliary functions.
The long-time existence of flow \eqref{feq} then follows by standard arguments.
In section 4, by considering a related geometric functional, we prove that
a subsequence of $\set{M_t}$ converges to a smooth solution to equation
\eqref{eq}, completing the proofs of Theorem \ref{thm1} and Theorem \ref{thm2}.

\section{Preliminaries}
Let $\R^n$ be the $n$-dimensional Euclidean space, and $\uS$ be the unit sphere
in $\R^n$.
Assume $M$ is a smooth closed strictly convex hypersurface in $\R^{n}$.
Without loss of generality, we may assume that $M$ encloses the origin.
The support function $h$ of $M$ is defined as
\begin{equation*}
h(x) := \max_{y\in M} \langle y,x \rangle, \quad \forall x\in\uS,
\end{equation*}
where $\langle \cdot,\cdot \rangle$ is the standard inner product in $\R^n$.

Denote the Gauss map of $M$  by $\nu_M$.
Then $M$ can be parametrized by the inverse Gauss map $X :
\mathbb{S}^{n-1}\rightarrow M$ with $X(x) =\nu_M^{-1}(x)$.
The support function $h$ of $M$ can be computed by
\begin{equation} \label{h}
  h(x) = \langle x, X(x)\rangle, \indent x \in \mathbb{S}^{n-1}.
\end{equation}
Note that $x$ is just the unit outer normal of $M$ at $X(x)$.
Differentiating \eqref{h}, we have
\begin{equation*}
  \nabla_{i} h = \langle \nabla_{i}x, X(x)\rangle + \langle x, \nabla_{i}X(x)\rangle.
\end{equation*}
  Since $\nabla_{i}X(x)$ is tangent to $M$ at $X(x)$, we have
\begin{equation*}
  \nabla_{i} h = \langle \nabla_{i}x, X(x)\rangle.
\end{equation*}
It follows that
\begin{equation}\label{Xh}
  X(x) = \nabla h + hx.
\end{equation}

By differentiating \eqref{h} twice, the  second fundamental form $A_{ij}$   of $M$
can be computed in terms of the support function, see for example \cite{Urbas1991},
\begin{equation}
\label{A} A_{ij} =  \nabla_{ij}h + he_{ij},
\end{equation}
where $\nabla_{ij} = \nabla_{i}\nabla_{j}$ denotes the second order covariant derivative with respect to $e_{ij}$.
The  induced metric matix $g_{ij}$ of $M$ can be derived by Weingarten's formula,
\begin{equation}
  \label{g}
  e_{ij} = \langle \nabla_{i}x, \nabla_{j}x\rangle  = A_{im}A_{lj}g^{ml}.
\end{equation}
The principal radii of curvature are the eigenvalues of the matrix $b_{ij} =
A^{ik}g_{jk}$.
When considering a smooth local orthonormal frame on $\uS$, by virtue of
\eqref{A} and \eqref{g}, we have
\begin{equation}
  \label{radii}
  b_{ij} = A_{ij} = \nabla_{ij}h + h\delta_{ij}.
\end{equation}
We will use
$b^{ij}$ to denote the inverse matrix of $b_{ij}$.

From the evolution equation of $X(x,t)$ in flow \eqref{feq}, we derive the
evolution equation of the corresponding support function $h(x,t)$:
\begin{equation}\label{seq}
\frac{\pd h(x,t)}{\pd t} = \frac{1}{f(x)}\sigma_{k}(x,t)\varphi(h)h(x,t)\eta(t) - h(x,t).
\end{equation}

The radial function $\rho$ of $M$ is given by
\begin{equation*}
\rho(u) :=\max\set{\lambda>0 : \lambda u\in M}, \quad\forall\ u\in\uS.
\end{equation*}
Note that $\rho(u)u\in M$.

From $\eqref{Xh}$, $u$ and $x$ are related by
\begin{equation}
  \label{rs}
  \rho(u)u = \nabla h(x) + h(x)x
\end{equation}
and
\beqs \rho^{2} = |\nabla h|^{2} + h^{2}.\eeqs

 In the rest of the paper, we take a local orthonormal frame $\{e_{1}, \cdots, e_{n-1}\}$ on $\mathbb{S}^{n-1} $ such that the standard metric on $\mathbb{S}^{n-1} $ is $\{\delta_{ij}\}$.  Double indices
 always mean to sum from $1$ to $n-1$. We  denote partial derivatives $ \dfrac{\partial \sigma_{k}}{\partial b_{ij}}$ and $ \dfrac{\partial^{2} \sigma_{k}}{\partial b_{pq}\partial b_{mn}}$ by $\sigma_{k}^{ij}$ and $\sigma_{k}^{pq,mn}$  respectively.  For convenience, we also write
\begin{equation*}
\begin{split}
N &= \frac{1}{f(x)}\varphi(h)h, \\
F & = N\sigma_{k}\eta(t).
\end{split}
\end{equation*}

Now, we can prove that the mixed volume  $\int_{\uS}h(x,t)\sigma_{k}(x,t) \dd x$ is non-decreasing along the flow \eqref{feq}.
 \begin{lemma}\label{lem1}
 $\int_{\uS}h(x,t)\sigma_{k}(x,t) \dd x $ is non-decreasing along the flow \eqref{feq}.
 \end{lemma}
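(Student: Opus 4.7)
The plan is a direct computation: differentiate $Q(t) := \int_{\uS} h \sigma_k \dd x$ in $t$, substitute the scalar flow \eqref{seq}, and recognize the specific choice of $\eta(t)$ in \eqref{eta} as exactly the one that turns the Cauchy--Schwarz inequality into the desired monotonicity.

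First I would compute $\partial_t \sigma_k$. Since $\sigma_k = \sigma_k(b_{ij})$ with $b_{ij} = \nabla_{ij} h + h\delta_{ij}$,
\[
 \partial_t \sigma_k = \sigma_k^{ij}\bigl(\nabla_{ij}h_t + h_t \delta_{ij}\bigr).
\]
Then I would use the classical null-Lagrangian (divergence-free) identity $\nabla_i \sigma_k^{ij} = 0$ on $\uS$, which allows integration by parts twice on $\int_{\uS} h \sigma_k^{ij}\nabla_{ij}h_t\dd x$ without picking up boundary terms and with no derivatives falling on $\sigma_k^{ij}$. Combined with the Euler relation $\sigma_k^{ij}b_{ij} = k\sigma_k$, this gives the standard variational formula
\[
\int_{\uS} h \,\partial_t \sigma_k \dd x = \int_{\uS} \sigma_k^{ij} b_{ij}\, h_t \dd x = k\int_{\uS} \sigma_k\, h_t \dd x,
\]
hence
\[
\frac{d}{dt} Q(t) = \int_{\uS}(h_t \sigma_k + h\,\partial_t\sigma_k)\dd x = (k+1)\int_{\uS} \sigma_k h_t \dd x.
\]

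Next I would substitute $h_t = \frac{1}{f}\varphi(h)h \sigma_k \eta(t) - h$ from \eqref{seq}:
\[
\frac{1}{k+1}\frac{d}{dt}Q(t) = \eta(t)\int_{\uS} \frac{\varphi(h) h \sigma_k^{2}}{f}\dd x - \int_{\uS} h \sigma_k \dd x.
\]
Plugging in the definition \eqref{eta} of $\eta$, non-negativity of $\frac{d}{dt}Q$ is equivalent to
\[
\left(\int_{\uS}\frac{hf}{\varphi(h)}\dd x\right)\left(\int_{\uS}\frac{\varphi(h)h\sigma_{k}^{2}}{f}\dd x\right) \geq \left(\int_{\uS} h\sigma_k \dd x\right)^{2}.
\]

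The final step is simply Cauchy--Schwarz applied to the pair of functions $u = \sqrt{hf/\varphi(h)}$ and $v = \sigma_k\sqrt{\varphi(h)h/f}$, which satisfy $uv = h\sigma_k$. There is no real obstacle here; the only point requiring some care is the integration-by-parts step, which relies on the well-known fact that $\sigma_k^{ij}$ (viewed as a function of $b_{ij} = \nabla_{ij}h + h\delta_{ij}$ on the sphere) is divergence-free. The whole argument is a short computation once $\eta(t)$ is revealed to be designed precisely so that the leading coefficient matches the ratio produced by Cauchy--Schwarz.
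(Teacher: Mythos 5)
Your proof is correct and follows essentially the same route as the paper: compute $\partial_t\sigma_k$, use the divergence-free identity $\nabla_i\sigma_k^{ij}=0$ together with the Euler relation $\sigma_k^{ij}b_{ij}=k\sigma_k$ to reduce $\frac{d}{dt}\int_{\uS}h\sigma_k\dd x$ to $(k+1)\int_{\uS}\sigma_k h_t\dd x$, then substitute \eqref{seq} and \eqref{eta} and apply Cauchy--Schwarz (the paper calls it H\"older's inequality). The only cosmetic difference is that the paper substitutes $h_t = F-h$ before integrating by parts while you keep $h_t$ abstract until the end; the underlying identities are identical.
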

 \begin{proof}   According to the evolution equation of $h$ in \eqref{seq}, we get
\begin{equation*}
\begin{split}
\partial_{t}\sigma_{k} & =\sigma_{k}^{ij}\partial_{t}(\nabla_{ij}h + \delta_{ij}h) \\
&=  \sigma_{k}^{ij}\nabla_{ij}(\partial_{t}h) +  \sigma_{k}^{ij}\delta_{ij}\partial_{t}h\\
&= \sigma_{k}^{ij}\nabla_{ij}F - \sigma_{k}^{ij}\nabla_{ij}h + \sigma_{k}^{ij}\delta_{ij}F - \sigma_{k}^{ij}\delta_{ij}h\\
&= \sigma_{k}^{ij}\nabla_{ij}F + \sigma_{k}^{ij}\delta_{ij}F - k\sigma_{k},
\end{split}
\end{equation*}
the last equality holds because $\sigma_{k}$ is homogeneous of degree $k$ and $\sigma_{k}^{ij}b_{ij} = k\sigma_{k}.$
Hence,
\begin{equation*}
\begin{split}
& \partial_{t}\int_{\uS}h\sigma_{k} \dd x\\
& =   \int_{\uS}(\partial_{t}\sigma_{k})h \dd x + \int_{\uS}\sigma_{k}\partial_{t}h\dd x\\
& = \int_{\uS}\left( h\sigma_{k}^{ij}\nabla_{ij}F + h\sigma_{k}^{ij}\delta_{ij}F - kh\sigma_{k}\right)\dd x +  \int_{\uS}F\sigma_{k}dx - \int_{\uS}h\sigma_{k}dx \\
& = (k+1)\int_{\uS}F\sigma_{k}\dd x  - (k+1)\int_{\uS}h\sigma_{k}\dd x + \int_{\uS}\left(h\sigma_{k}^{ij}\nabla_{ij}F - F\sigma_{k}^{ij}\nabla_{ij}h\right)\dd x\\
& = (k+1)\int_{\uS}F\sigma_{k}\dd x  - (k+1)\int_{\uS}h\sigma_{k}\dd x,
\end{split}
\end{equation*}
where in the last equality  we use the fact $\sum_{i}\nabla_{i}(\sigma_{k}^{ij}) = 0$.

By H\"older's Inequality, we have
\begin{equation*}
\begin{split}
 &\frac{1}{k+1}\partial_{t}\int_{\uS}h\sigma_{k} \dd x\\
  &= \int_{\uS}\frac{1}{f(x)}\sigma_{k}^{2}\varphi(h)h\eta\dd x - \int_{\uS}h\sigma_{k} \dd x\\
 & = \frac{1}{\int_{\uS}h\sigma_{k}\dd x}\left[\int_{\uS}\frac{1}{f(x)}\sigma_{k}^{2}\varphi(h)h\dd x\int_{\uS}\frac{h}{\varphi(h)}f(x)\dd x - \left(\int_{\uS}h\sigma_{k}\dd x\right)^{2}\right]\\
 & \geq 0,
\end{split}
\end{equation*}
and the equality holds if and only if
$$ c\varphi (h)\sigma_{k}(x) = f(x)$$
for some positive constant $c$.
\end{proof}

By the flow equation \eqref{feq}, we can derive evolution equations of some geometric quantities.

 \begin{lemma}\label{evolutions}
The following evolution equations hold along the flow \eqref{feq}.
\begin{equation*}
\begin{split}
& \partial_{t}b_{ij} -  N\eta(t)\sigma_{k}^{pq}\nabla_{pq}b_{ij} \\
 & = (k+1) N\eta(t)\sigma_{k}\delta_{ij} -  N\eta(t)\sigma_{k}^{pq}\delta_{pq}b_{ij} + N\eta(t)(\sigma_{k}^{ip}b_{jp} - \sigma_{k}^{jp}b_{ip})\\
& +  N\eta(t)\sigma_{k}^{pq,mn}\nabla_{j}b_{pq}\nabla_{i}b_{mn} + \eta(t)\left(\sigma_{k}\nabla_{ij}N   + \nabla_{j}\sigma_{k}\nabla_{i}N
+ \nabla_{i}\sigma_{k}\nabla_{j}N\right) - b_{ij}\\
& \partial_{t}b^{ij} -  N\eta(t)\sigma_{k}^{pq}\nabla_{pq}b^{ij} \\
 & =- (k+1) N\eta(t)\sigma_{k}b^{ip}b^{jp} +  N\eta(t)\sigma_{k}^{pq}\delta_{pq}b^{ij} - N\eta(t)b^{ip}b^{jq}(\sigma_{k}^{rp}b_{rq} - \sigma_{k}^{rq}b_{rp})\\
& -  N\eta(t)b^{il}b^{js}(\sigma_{k}^{pq,mn} + 2\sigma_{k}^{pm}b^{nq})\nabla_{l}b_{pq}\nabla_{s}b_{mn}\\
& -\eta(t)b^{ip}b^{jq}( \sigma_{k}\nabla_{ij}N  + \nabla_{j}\sigma_{k}\nabla_{i}N
+ \nabla_{i}\sigma_{k}\nabla_{j}N) + b^{ij}\\
 & \partial_{t}\left(\frac{\rho^{2}}{2}\right) - N\eta(t)\sigma_{k}^{ij}\nabla_{ij}\left(\frac{\rho^{2}}{2}\right)\\
& = (k+1)hN\eta(t)\sigma_{k} - \rho^{2} + \eta(t)\sigma_{k}\nabla_{i}h\nabla_{i}N -  N\eta(t)\sigma_{k}^{ij}b_{mi}b_{mj}.
\end{split}
 \end{equation*}
 \end{lemma}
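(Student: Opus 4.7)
The strategy is a direct computation starting from the evolution equation for the support function, namely $\partial_t h = F - h$ with $F = N\eta(t)\sigma_k$ (this is equation \eqref{seq}), and then chasing the identities through the algebraic definitions of $b_{ij}$, $b^{ij}$, and $\rho^2/2$. Two tensorial tools will do most of the work: the homogeneity identity $\sigma_k^{pq}b_{pq} = k\sigma_k$, and the Codazzi/Ricci identities on $\uS$. Since the standard metric is $\{\delta_{ij}\}$, the sphere's curvature tensor is $R_{ijkl} = \delta_{ik}\delta_{jl} - \delta_{il}\delta_{jk}$, which I will use to commute covariant derivatives.

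For the first evolution equation, I would begin with
\[
\partial_t b_{ij} = \nabla_{ij}(\partial_t h) + \delta_{ij}\partial_t h = \nabla_{ij}F + \delta_{ij}F - b_{ij},
\]
then expand $\nabla_{ij}F = \eta\bigl(\sigma_k\nabla_{ij}N + N\nabla_{ij}\sigma_k + \nabla_i\sigma_k\nabla_j N + \nabla_j\sigma_k\nabla_i N\bigr)$, and write $\nabla_{ij}\sigma_k = \sigma_k^{pq,mn}\nabla_i b_{pq}\nabla_j b_{mn} + \sigma_k^{pq}\nabla_{ij}b_{pq}$. The crucial step is to commute $\sigma_k^{pq}\nabla_{ij}b_{pq}$ into $\sigma_k^{pq}\nabla_{pq}b_{ij}$. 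Since $\nabla_p b_{ij}$ is totally symmetric in its three indices (a short Ricci-identity check using $\nabla_{ij}h = \nabla_{ji}h$), the commutator $\nabla_{ij}b_{pq} - \nabla_{pq}b_{ij} = [\nabla_i,\nabla_p]b_{jq}$ produces exactly the curvature terms $\delta_{ij}b_{pq} + \delta_{iq}b_{jp} - \delta_{jp}b_{iq} - \delta_{pq}b_{ij}$. Contracting with $\sigma_k^{pq}$ and using $\sigma_k^{pq}b_{pq} = k\sigma_k$ yields the three terms $k\sigma_k\delta_{ij}$, $\sigma_k^{ip}b_{jp} - \sigma_k^{jp}b_{ip}$, and $-\sigma_k^{pq}\delta_{pq}b_{ij}$; the $k$ combines with the $\delta_{ij}F$ term to give the $(k+1)$ factor.

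The second equation follows mechanically from the first. Differentiating $b^{ip}b_{pj} = \delta_{ij}$ gives $\partial_t b^{ij} = -b^{ip}b^{jq}\partial_t b_{pq}$ and analogously $\nabla_l b^{ij} = -b^{ip}b^{jq}\nabla_l b_{pq}$. Differentiating this latter formula once more in direction $m$ produces the extra quadratic term $2b^{ia}b^{pc}b^{jq}\nabla_m b_{ac}\nabla_l b_{pq}$, which when combined with the $\sigma_k^{pq,mn}$ quadratic term from the $b_{ij}$ evolution yields the combination $\sigma_k^{pq,mn} + 2\sigma_k^{pm}b^{nq}$ appearing in the statement; the overall sign flips because of the two $-b^{ip}b^{jq}$ factors. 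For the third equation, I would compute $\nabla_i(\rho^2/2) = h\nabla_i h + \nabla_l h\,\nabla_{il}h = b_{il}\nabla_l h$ (the $h\nabla_i h$ cancels using $\nabla_{il}h = b_{il} - h\delta_{il}$); then $\nabla_{ki}(\rho^2/2) = \nabla_k b_{il}\nabla_l h + b_{il}b_{kl} - hb_{ik}$. Contracting with $\sigma_k^{ki}$ and using the Codazzi symmetry $\sigma_k^{ki}\nabla_k b_{il} = \nabla_l \sigma_k$ together with $\sigma_k^{ki}b_{ki} = k\sigma_k$, combined with the straightforward $\partial_t(\rho^2/2) = \langle X,\partial_t X\rangle = Fh - \rho^2$, gives the stated identity (after rewriting $\nabla_l\sigma_k$ through $\nabla_l F$, which is where the $\sigma_k\nabla_i N$ form arises).

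The main obstacle is bookkeeping: each expansion of $\nabla_{ij}\sigma_k$ creates many terms, and the key $(k+1)\sigma_k\delta_{ij}$ cancellation comes from combining the curvature contribution $k\sigma_k\delta_{ij}$ with $\delta_{ij}F$, a coincidence that requires organizing terms by their tensor type before simplifying. I would verify the Codazzi symmetry $\nabla_p b_{ij} = \nabla_i b_{pj}$ carefully once, then reuse it freely to move derivatives past each other and to identify $\sigma_k^{pq}\nabla_l b_{pq} = \nabla_l\sigma_k$. Aside from that, the derivations are straightforward applications of the chain and product rules together with the flow equation \eqref{seq}.
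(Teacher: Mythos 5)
Your approach is the same as the paper's: start from $\partial_t h = F - h$ in the Gauss-map parametrization, compute $\nabla_{ij}(\partial_t h)$, and use the Codazzi/Ricci commutator on $\uS$ (what the paper loosely calls the ``Gauss equation'') to move the second covariant derivative from $b_{pq}$ onto $b_{ij}$. The $b_{ij}$ and $b^{ij}$ sketches are correct: your commutator identity $\nabla_{ij}b_{pq} - \nabla_{pq}b_{ij} = \delta_{ij}b_{pq} - \delta_{pq}b_{ij} + \delta_{iq}b_{jp} - \delta_{jp}b_{iq}$ matches the paper's after substituting $\nabla_{pq}h = b_{pq} - \delta_{pq}h$, and the extra quadratic term $2\sigma_k^{pm}b^{nq}$ does indeed arise from differentiating $\nabla_l b^{ij} = -b^{ip}b^{jq}\nabla_l b_{pq}$ a second time, exactly as the paper (citing Chow--Tsai and Urbas) does.

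There is, however, a concrete slip in the $\rho^2/2$ computation. You write $\partial_t(\rho^2/2) = \langle X, \partial_t X\rangle = Fh - \rho^2$, but the second equality requires $\partial_t X = F\nu - X$, i.e.\ the Lagrangian parametrization of the flow. That is incompatible with the term $\sigma_k^{ij}\nabla_{ij}(\rho^2/2)$ you subtract, in which $\nabla$ is the spherical covariant derivative acting on quantities parametrized by the Gauss map. In that parametrization $X(x,t) = \nabla h + hx$ and $\partial_t X = \nabla\partial_t h + (\partial_t h)x$ has tangential components, so in fact
\begin{equation*}
\partial_t\left(\tfrac{\rho^2}{2}\right) = h\,\partial_t h + \nabla_i h\,\nabla_i\partial_t h = Fh - \rho^2 + \nabla_i h\,\nabla_i F.
\end{equation*}
With your version you would be left, after subtracting $N\eta\sigma_k^{ij}\nabla_{ij}(\rho^2/2) = N\eta\bigl(\nabla_l\sigma_k\nabla_l h + \sigma_k^{ij}b_{il}b_{jl} - kh\sigma_k\bigr)$, with an uncancelled $-\nabla_l F\,\nabla_l h$. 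Restoring the missing $\nabla_i h\,\nabla_i F$ term, expanding $\nabla_i F = \eta\bigl(N\nabla_i\sigma_k + \sigma_k\nabla_i N\bigr)$, and noting that $N\eta\nabla_i h\,\nabla_i\sigma_k$ cancels against the $-N\eta\nabla_l\sigma_k\nabla_l h$ from the diffusion term, you recover exactly the $\eta\sigma_k\nabla_i h\,\nabla_i N$ in the statement. Your intermediate observation $\nabla_i(\rho^2/2) = b_{il}\nabla_l h$ and the contraction with $\sigma_k^{ij}$ are both fine; it is only the time derivative that needs the correction.
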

 \begin{proof}
 From \eqref{feq},
 \begin{equation*}
\partial_{t}\nabla_{ij}h = \nabla_{ij}(\partial_{t}h) = \eta(t)(\sigma_{k}\nabla_{ij}N   + \nabla_{j}\sigma_{k}\nabla_{i}N
+ \nabla_{i}\sigma_{k}\nabla_{j}N) + N\eta(t)\nabla_{ij}\sigma_{k} - h_{ij},
\end{equation*}
where
\begin{equation*}
\nabla_{ij}\sigma_{k} = \sigma_{k}^{pq,mn}\nabla_{j}b_{pq}\nabla_{i}b_{mn} + \sigma_{k}^{pq}\nabla_{ij}b_{pq}.
\end{equation*}
By Gauss equation,
\begin{equation*}
\nabla_{ij}b_{pq} = \nabla_{pq}b_{ij} + \delta_{ij}\nabla_{pq}h - \delta_{pq}\nabla_{ij}h  + \delta_{iq}\nabla_{pj}h  - \delta_{pj}\nabla_{iq}h .
\end{equation*}
Hence
\begin{equation*}
\begin{split}
\partial_{t}h_{ij} & =  N\eta(t)\sigma_{k}^{pq}\nabla_{pq}b_{ij} + k N\eta(t)\sigma_{k}\delta_{ij} -  N\eta(t)\sigma_{k}^{pq}\delta_{pq}b_{ij} + N\eta(t)(\sigma_{k}^{ip}b_{jp} - \sigma_{k}^{jp}b_{ip})\\
& +  N\eta(t)\sigma_{k}^{pq,mn}\nabla_{j}b_{pq}\nabla_{i}b_{mn} + \eta(t)(\sigma_{k}\nabla_{ij}N  + \nabla_{j}\sigma_{k}\nabla_{i}N
+ \nabla_{i}\sigma_{k}\nabla_{j}N) - h_{ij}.
\end{split}
\end{equation*}
This together with $\eqref{radii}$ gives the evolution equation of $b_{ij}$.
The evolution equation of $b^{ij} $ then follows from
\begin{equation*}
\partial_{t}b^{ij} = -b^{im}b^{lj}\partial_{t}b_{ml}.
\end{equation*}
 For more details of computations about the evolution equations of $b_{ij}$ and $b^{ij}$, one can refer to \cite{ChowTsai1997,Urbas1991}.

Recalling that $ \rho^{2} = h^{2} + |\nabla h|^{2}$, we have
 \begin{equation*}
 \begin{split}
& \partial_{t}\left(\frac{\rho^{2}}{2}\right) - N\eta(t)\sigma_{k}^{ij}\nabla_{ij}\left(\frac{\rho^{2}}{2}\right)\\
& = h\partial_{t}h + \nabla_{i}h \nabla_{i}\partial_{t}h - N\eta(t)\sigma_{k}^{ij}\left(h\nabla_{ij}h + \nabla_{i}h\nabla_{j}h + \nabla_{m}h\nabla_{j}\nabla_{mi}h + \nabla_{mi}h\nabla_{mj}h \right)\\
& = h\partial_{t}h + \nabla_{i}h\nabla_{i}(N\eta(t)\sigma_{k} - h) \\
&- N\eta(t)\sigma_{k}^{ij}\left[\nabla_{i}h\nabla_{j}h + \nabla_{m}h\nabla_{j}(b_{mi} - h\delta_{mi}) \right] - N\eta(t)\sigma_{k}^{ij}h(b_{ij} - h\delta_{ij}) \\
& - N\eta(t)\sigma_{k}^{ij}(b_{mi} - h\delta_{mi})(b_{mj} - h\delta_{mj})\\
& = (k+1)hN\eta(t)\sigma_{k} - \rho^{2} + \eta(t)\sigma_{k}\nabla_{i}h\nabla_{i}N -  N\eta(t)\sigma_{k}^{ij}b_{mi}b_{mj}.
\end{split}
 \end{equation*}
  \end{proof}

\section{The long-time existence of  the flow }

In this section, we will give a priori estimates about support functions and curvatures to
obtain the long-time existence of flow \eqref{feq} under assumptions of Theorem \ref{thm1} and Theorem \ref{thm2}.

In the rest of this paper, we  assume that $M_{0}$ is a smooth,
closed,  strictly convex hypersurface in $\R^{n}$ and
$h: \uS\times[0,T)\to \R$ is a smooth solution to the evolution equation \eqref{seq}
with the initial $h(\cdot,0)$ the support function of $M_0$.
Here $T$ is the maximal time for which the solution exists.
Let $M_t$ be the convex hypersurface determined by $h(\cdot,t)$, and
$\rho(\cdot,t)$ be the corresponding radial function.

We first give the uniform positive upper and lower bounds of $h(\cdot,t)$ and $\rho(u,t)$ for $t\in[0,T)$.

\begin{lemma}\label{C0-estimate} Let $h$ be a smooth  solution of $\eqref{seq}$ on $\mathbb{S}^{n-1} \times [0, T)$, $f$ be a positive, smooth function on $\uS$
and $\varphi\in
  C^\infty(0,+\infty)$ be a decreasing function satisfying ${\rm \bf (A)}$. Then
\beqn  \label{h1}\frac{1}{C} \leq  h(x,t) \leq C, \\
\label{rho1}\frac{1}{C} \leq  \rho(u,t) \leq C ,\eeqn
where $C$ is a positive constant independent of $t$.
\end{lemma}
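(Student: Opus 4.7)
The plan is to combine a conservation law built into the choice of $\eta(t)$ with the maximum principle applied at the spatial extrema of $h$. Writing $\phi(s) = \int_0^s d\tau/\varphi(\tau)$, which is finite at $s=0$ (since $1/\varphi(\tau) < \tau^{k+\varepsilon}/\alpha$ near $0$ by {\bf (A)}) and unbounded at $+\infty$ by {\bf (A)}, a direct computation using \eqref{seq} and the very definition \eqref{eta} of $\eta$ gives
\[
\frac{d}{dt}\int_{\uS}\phi(h)\,f\,dx \;=\; \eta(t)\int_{\uS} h\sigma_k\,dx \;-\; \int_{\uS}\frac{hf}{\varphi(h)}\,dx \;=\; 0,
\]
so $I(t):=\int_{\uS}\phi(h)f\,dx$ is conserved along the flow. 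For the upper bound on $h$, let $x_0(t)\in\uS$ realize $h_{\max}(t)$. Convexity of $M_t$ yields $h(x,t)\geq h_{\max}(t)\langle x_0,x\rangle_+$, so $h(\cdot,t)\geq h_{\max}(t)/2$ on the cap $H:=\{x:\langle x_0,x\rangle\geq 1/2\}$. Since $\phi$ is increasing, $I(t)\geq c\,\phi(h_{\max}(t)/2)$ with $c>0$ depending only on $\min_{\uS} f$ and $|H|$; unboundedness of $\phi$ at infinity then forces $h_{\max}(t)\leq C$.

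For the lower bound on $h$, Lemma \ref{lem1} supplies $\int h\sigma_k\,dx\geq V_0:=\int h(\cdot,0)\sigma_k(\cdot,0)\,dx>0$ for all $t$. Since $M_t$ is contained in the ball of radius $h_{\max}(t)\leq C$, monotonicity of mixed volumes gives $\int h\sigma_k\,dx\leq c(n)\,h_{\max}(t)^{k+1}$, and comparing with $\geq V_0$ produces $h_{\max}(t)\geq h_*>0$. Reapplying the convexity estimate at the new maximum point, $h\geq h_*/2$ on a spherical cap of fixed size; continuity of $\varphi$ on the compact interval $[h_*/2,\,C]$ then yields a uniform positive lower bound for the numerator $\int hf/\varphi(h)\,dx$ of $\eta(t)$. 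Coupled with the uniform upper bound $\int h\sigma_k\,dx\leq c(n)C^{k+1}$ for the denominator, this gives $\eta(t)\geq\eta_0>0$ uniformly in $t$.

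With $\eta$ bounded below, I apply Hamilton's trick at the minimum point $x_1(t)$ of $h(\cdot,t)$. There $\nabla^2 h\geq 0$, so $b_{ij}\geq h\,\delta_{ij}$ and hence $\sigma_k\geq h_{\min}^{k}$ by monotonicity and the normalization $\sigma_k(1,\dots,1)=1$; substituting into \eqref{seq} yields
\[
\frac{d}{dt}h_{\min}\;\geq\;h_{\min}\!\left[\frac{\eta_0\,h_{\min}^{k}\,\varphi(h_{\min})}{\max_{\uS} f}-1\right].
\]
The first half of {\bf (A)} gives $h_{\min}^{k}\,\varphi(h_{\min})>\alpha\,h_{\min}^{-\varepsilon}\to+\infty$ as $h_{\min}\to 0^{+}$, so the bracket is positive once $h_{\min}$ drops below some threshold $h_{**}>0$; consequently $h_{\min}(t)\geq\min\{h_{**},\,\min_{\uS}h(\cdot,0)\}>0$. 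Bounds on $\rho$ then follow from those on $h$ by a standard sandwich: if $1/C\leq h\leq C$, then $M_t$ lies between two origin-centered balls of radii $1/C$ and $C$, so $\rho$ inherits the same bounds.

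The main obstacle in this scheme is securing the uniform positive lower bound on $\eta(t)$: neither the conservation of $I(t)$ alone (which controls only $h$ from above) nor the monotonicity of $\int h\sigma_k\,dx$ alone (which controls only $h_{\max}$ from below) suffices, and the two must be combined with the hemisphere estimate and the quantitative bound $\varphi>\alpha s^{-k-\varepsilon}$ from {\bf (A)} in order to close the max-principle argument for $h_{\min}$.
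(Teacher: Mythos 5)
Your proposal is correct and follows essentially the same route as the paper: conservation of $\int_{\uS}\phi(h)f\,dx$ plus the spherical-cap estimate for the upper bound on $h$, the monotonicity of $\int h\sigma_k$ (Lemma \ref{lem1}) combined with the mixed-volume inequality to obtain $h_{\max}\geq c_2>0$ and hence the uniform lower bound on $\eta(t)$, and finally the maximum principle at the minimum of $h$ together with $\varphi>\alpha s^{-k-\varepsilon}$ for the lower bound on $h$. The only cosmetic difference is that the paper phrases the first step via the maximum of the radial function $\rho$ rather than of $h$, but since $\rho_{\max}=h_{\max}$ these coincide.
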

\begin{proof}
Let $J(t) = \int_{\uS} \phi(h(x,t))f(x)\dd x$. We claim that $J(t)$ is unchanged along the flow $\eqref{feq}$. It is because
\begin{equation*}
\begin{split}
 J'(t) &= \int_{\uS}\phi'(h)\partial_{t} hf(x) \dd x \\
&=   \int_{\uS}\frac{f(x)}{\varphi(h)}\partial_{t} h \dd x \\
&= \int_{\uS}\frac{f(x)}{\varphi(h)}\left(\frac{1}{f(x)} \sigma_{k}(x)\varphi(h)h\eta(t) - h\right)\dd x\\
&= \int_{\uS}\sigma_{k}(x)h\eta(t)\dd x  - \int_{\uS}\frac{h}{\varphi(h)}f(x) \dd x\\
& = 0.
\end{split}
\end{equation*}

For each $t \in [0,T)$, suppose that the maximum of radial function $\rho(\cdot,t)$ is attained at some $u_{t} \in \uS$.
 Let
$$R_{t} = \max_{u \in \uS}\rho (u,t) = \rho (u_{t},t)$$ for some $u_{t}\in \uS$.
By the definition of support function, we have
\begin{equation*}
h(x,t)\geq R_t\langle x,u_t \rangle, \quad \forall x\in\uS.
\end{equation*}
Denote the hemisphere containing $u_t$ by $S_{u_{t}}^{+} =\set{x\in\uS : \langle x,u_t \rangle > 0}$. Since  $\phi'(h) = \frac{1}{\varphi(h)} > 0 $ implies that $\phi(h) $ is strictly increasing about $h$, we have
\begin{equation*}
\begin{split}
J(0) & = J(t)\\
& \geq \int_{S_{u_{t}}^{+}} \phi(h(x,t))f(x)\dd x \\
 &  \geq \int_{S_{u_{t}}^{+}}\phi(R_{t} \langle x,u_t \rangle)f(x)\dd x\\
 & \geq  f_{\min}\int_{S_{u_{t}}^{+}}\phi(R_{t} \langle x,u_t \rangle)\dd x\\
   &= f_{\min} \int_{S^{+}} \phi(R_tx_1) \dd x,
 \end{split}
\end{equation*}
 where $S^{+}  =\set{x\in \uS : x_1\ >  0}$.

Denote $S_1 =\set{x\in \uS : x_1\geq 1/2}$, then
\begin{equation*}
\begin{split}
  J(0)
  &\geq f_{\min} \int_{S_1} \phi(R_t/2) \dd x \\
  &= f_{\min} \phi(R_t/2) |S_1|,
\end{split}
\end{equation*}
which implies that $\phi(R_t/2)$ is uniformly bounded from above.
By assumption {\bf (A)}, $\phi(s)$ is strictly increasing and tends to $+\infty$
as $s\to+\infty$.
Thus $R_t$ is uniformly bounded from above.

Now we can prove that $\eta(t)$ has positive lower bound. Since mixed volumes are monotonic increasing, see \cite[page 282]{Schneider}, we
have for each $t \in [0,T)$
 \begin{equation*}
 h_{\min}^{k+1}(t) \leq \dfrac{ \int_{\uS}h\sigma_{k} \dd x }{\omega_{n-1}}\leq   h_{\max}^{k+1}(t),
\end{equation*}
here $h_{\min}(t) = \min_{x \in \uS} h(x,t)$ and $h_{\max}(t) = \max_{x \in \uS} h(x,t)$.

 This  together with Lemma \ref{lem1} and  the upper bound of $h$ implies that there exist  positive constants $c_{1}$ and $c_{2}$ such that
  \begin{equation*}
 \int_{\uS}h\sigma_{k} \dd x \leq c_{1}
\end{equation*}
and
 \begin{equation*}
 h_{\max}(t) \geq  c_{2}.
 \end{equation*}
Recalling the definition of  $\eta(t)  $ and noticing that $\dfrac{1}{\varphi(s)}$ is an increasing function,
we have
 \begin{equation*}
 \begin{split}
\eta(t) & = \dfrac{\int_{\uS}hf(x)/\varphi(h)\dd x}{\int_{\uS}h\sigma_{k}\dd x}\\
&\geq \frac{1}{c_{1}}\int_{S_{u_{t}}^{+}} hf(x)/\varphi(h)\dd x\\
  & \geq \frac{1}{c_{1}}\int_{S_{u_{t}}^{+}}R_{t} \langle x,u_t \rangle f_{\min}\frac{1}{\varphi(R_{t}\langle x,u_t \rangle)}\dd x\\
  & = \frac{1}{c_{1}}f_{\min}\int_{S^{+}}R_{t}x_{1}\frac{1}{\varphi(R_{t} x_{1})}\dd x\\
  & \geq \frac{1}{c_{1}}f_{\min}|S_{1}| \frac{1}{2}R_{t}\frac{1}{\varphi( \frac{1}{2}R_{t})}\\
  & \geq c_{3},
\end{split}
 \end{equation*}
where $c_{3}$ is a positive constant independent of $t$.

Suppose the minimum of $h(x,t)$ is attained at a point $(x_{t}, t)$. At $(x_{t}, t)$, $\nabla_{ij}h$ is non-negative. It follows that
\begin{equation*}
\sigma_{k}(x_{t}, t) \geq h_{\min}^{k}(t).
 \end{equation*}
Then in the sense of the lim inf of  difference quotient, see \cite{Hamilton86}, we have
\begin{equation*}
 \begin{split}
 \frac{\partial h_{\min}(t)}{\partial t} & \geq \frac{1}{f_{\max}}h_{\min}(t)[\eta(t)h_{\min}^{k}(t)\varphi(h_{\min}(t)) - f(x)]\\
  & \geq \frac{1}{f_{\max}}h_{\min}(t)[c_{3}h_{\min}^{k}(t)\varphi(h_{\min}(t))- f_{\max}].
\end{split}
 \end{equation*}
 If $\varphi(s) > \alpha s^{-k-\varepsilon} $ for some positive constant $\varepsilon$ for $s $ near $0$, then
 \begin{equation*}
 \begin{split}
 \frac{\partial h_{\min}(t)}{\partial t}
  & \geq \frac{1}{f_{\max}}h_{\min}(t)(h_{\min}^{-\varepsilon}(t)\alpha c_{3}- f_{\max}).
\end{split}
 \end{equation*}
 The right hand of the above inequality is positive for $h_{\min}(t)$ small enough and the lower bound of $h_{\min}(t)$ follows from the maximum principle in \cite{Hamilton86}.

\end{proof}

By the equality $\rho^{2} = h^{2} + |\nabla h|^{2}$, we can obtain the gradient estimate of support function from Lemma \ref{C0-estimate}.

\begin{corollary}\label{cor3.2}
  Under the assumptions of Lemma \ref{C0-estimate}, we have
\begin{equation*}
  |\nabla h(x,t)| \leq C, \quad \forall (x,t) \in \mathbb{S}^{n-1} \times [0, T),
\end{equation*}
where $C$ is a positive constant depending only on constants in Lemma \ref{C0-estimate}.
\end{corollary}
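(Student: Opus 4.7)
The plan is to extract the gradient bound directly from the identity $\rho^{2} = h^{2} + |\nabla h|^{2}$ established in Section 2, combined with the uniform upper bound for $\rho$ already obtained in Lemma \ref{C0-estimate}. There is really no obstacle here; this is an immediate algebraic consequence rather than a genuine a priori estimate.

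More concretely, I would first recall the formula $X(x,t) = \nabla h(x,t) + h(x,t) x$ derived in \eqref{Xh}, from which $|X|^{2} = h^{2} + |\nabla h|^{2}$ follows (using $\langle x, \nabla h\rangle = 0$ since $\nabla h$ is a tangential vector on $\uS$). Since $|X(x,t)| = \rho(\nu_{M_{t}}^{-1}(x)) \cdot |u|$ — more precisely, for the $u\in\uS$ with $\rho(u,t)u = X(x,t)$ one has $|X(x,t)| = \rho(u,t)$ — the identity $\rho^{2} = h^{2} + |\nabla h|^{2}$ holds pointwise on $\uS \times [0,T)$.

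Then I would simply apply Lemma \ref{C0-estimate} to conclude
\begin{equation*}
|\nabla h(x,t)|^{2} = \rho^{2} - h^{2} \leq \rho^{2} \leq C^{2},
\end{equation*}
where $C$ is the constant from \eqref{rho1}. Taking square roots gives the desired uniform bound $|\nabla h(x,t)| \leq C$ on $\uS \times [0,T)$, with the constant depending only on the constants appearing in Lemma \ref{C0-estimate} (and, implicitly, on $M_{0}$ and $\varphi$ through those constants). This completes the proof of Corollary \ref{cor3.2}.
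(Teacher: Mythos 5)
Your proof is correct and matches the paper's argument exactly: the paper also derives Corollary \ref{cor3.2} directly from the identity $\rho^{2} = h^{2} + |\nabla h|^{2}$ together with the bounds of Lemma \ref{C0-estimate}, and provides no further detail.
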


 The uniform bounds of $\eta(t)$ can be derived from Lemmas \ref{lem1} and \ref{C0-estimate}.
\begin{lemma}\label{eta-estimate}   Under the assumptions of Lemma \ref{C0-estimate}, $\eta(t)$ is uniformly bounded above and below from zero.
\end{lemma}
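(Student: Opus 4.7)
The plan is to read off both bounds of $\eta(t)$ by separately controlling the numerator and denominator of
$$\eta(t) = \frac{\int_{\uS} h(x,t) f(x)/\varphi(h(x,t))\,\dd x}{\int_{\uS} h(x,t)\sigma_k(x,t)\,\dd x},$$
using the inputs already collected: the two-sided bound $1/C \le h(x,t) \le C$ from Lemma \ref{C0-estimate}, the monotonicity from Lemma \ref{lem1}, the mixed-volume comparison $h_{\min}^{k+1}\omega_{n-1} \le \int_{\uS}h\sigma_k\,\dd x \le h_{\max}^{k+1}\omega_{n-1}$ already invoked in the proof of Lemma \ref{C0-estimate}, and the smoothness/positivity of $f$ and $\varphi$.

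For the numerator, the $C^0$ estimate keeps $h(\cdot,t)$ inside the compact interval $[1/C,C]\subset (0,+\infty)$. Since $\varphi\in C^\infty(0,+\infty)$ is positive, both $\varphi(h)$ and $1/\varphi(h)$ are bounded between positive constants uniformly in $(x,t)$; similarly $f_{\min}\le f\le f_{\max}$ on $\uS$. Hence the integrand $hf/\varphi(h)$ is trapped between two positive constants, so there exist $0<a_1\le a_2<\infty$, independent of $t$, with $a_1\le \int_{\uS}hf/\varphi(h)\,\dd x\le a_2$.

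For the denominator, the upper bound is the easier half: the comparison $\int_{\uS}h\sigma_k\,\dd x\le \omega_{n-1}h_{\max}^{k+1}(t)\le \omega_{n-1}C^{k+1}$ is immediate from the upper bound of $h$. The lower bound is where I would use Lemma \ref{lem1}: since $\int_{\uS}h\sigma_k\,\dd x$ is non-decreasing along the flow, it is bounded below by its initial value $\int_{\uS}h(\cdot,0)\sigma_k(\cdot,0)\,\dd x$, which is strictly positive because $M_0$ is smooth and strictly convex (so all principal radii are positive and $h(\cdot,0)$ is positive). Thus there are positive constants $b_1\le \int_{\uS}h\sigma_k\,\dd x\le b_2$ independent of $t$.

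Combining the two, $a_1/b_2 \le \eta(t) \le a_2/b_1$, which is the desired two-sided uniform bound. There is no real obstacle: the only place one might pause is to verify that the positive initial value of $\int_{\uS}h\sigma_k\,\dd x$ provides the lower bound even before any other estimate on the denominator is available, but this is guaranteed by Lemma \ref{lem1}. An alternative to the monotonicity argument would be to use the lower bound $\int_{\uS}h\sigma_k\,\dd x\ge \omega_{n-1}h_{\min}^{k+1}(t)\ge \omega_{n-1}C^{-(k+1)}$ directly from Lemma \ref{C0-estimate}, which avoids Lemma \ref{lem1} entirely; either route closes the argument.
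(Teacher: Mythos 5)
Your proof is correct and follows the same basic scheme as the paper: bound the numerator and denominator of $\eta(t)$ separately, using the $C^0$ estimate for the numerator and (either monotonicity from Lemma~\ref{lem1} or the mixed-volume comparison) for the denominator. The one place you diverge usefully is the lower bound on the numerator: the paper merely points back to the argument inside the proof of Lemma~\ref{C0-estimate}, which is a more roundabout estimate (it had to avoid using the lower bound on $h$, since that was not yet available there), whereas you observe that once Lemma~\ref{C0-estimate} is finished, $h(\cdot,t)$ lives in a fixed compact subinterval of $(0,\infty)$ so $hf/\varphi(h)$ is trapped between two positive constants by continuity and positivity of $\varphi$ and $f$ alone. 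That is cleaner and is exactly the right simplification at this stage; your alternative lower bound on the denominator via $\omega_{n-1}h_{\min}^{k+1}\ge \omega_{n-1}C^{-(k+1)}$ is also valid and makes the lemma independent of Lemma~\ref{lem1}.
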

\begin{proof}
In term of the proof of Lemma \ref{C0-estimate}, $\eta(t)$ has uniform positive lower bound. From Lemma \ref{lem1}, we know that
$\int_{\uS}h\sigma_{k} \dd x$ is monotonic decreasing about $t$, which give a positive lower bound on $\int_{\uS}h\sigma_{k}\dd x$. This together  with
the uniform bounds of $h(x,t)$ implies that $\eta(t)$ is bounded from above.

\end{proof}

To obtain the long-time existence of the flow $\eqref{feq}$, we need to establish the uniform bounds on principal curvatures.
By Lemma \ref{C0-estimate}, for any $t\in[0,T)$, $h(\cdot,t)$
 always ranges within a bounded interval $I'=[1/C,C]$, where $C$ is the
 constant in Lemma \ref{C0-estimate}.
 First, we  give the estimates of $\sigma_{k}$.

\begin{lemma}\label{sigma-lower-estimate}   Under the assumptions of Lemma \ref{C0-estimate},
$$\sigma_{k}(x,t) \geq C, \quad \forall (x,t) \in \mathbb{S}^{n-1} \times [0, T),$$
where $C$ is a positive constant independent of $t$.
\end{lemma}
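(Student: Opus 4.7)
The plan is to apply the parabolic minimum principle to the flow speed $F(x,t):=\eta(t)N(x,t)\sigma_{k}(x,t)$, where $N=\varphi(h)h/f$, rather than to $\sigma_{k}$ directly. By Lemma~\ref{C0-estimate}, Lemma~\ref{eta-estimate}, and the continuity and positivity of $\varphi$ on the compact range $[1/C,C]$ of $h$, the factor $\eta N$ is bounded above and below by positive constants, so a uniform positive lower bound for $F$ is equivalent to the desired estimate $\sigma_{k}\geq C$.

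First I would derive the evolution equation for $F$ from $\partial_{t}h=F-h$ together with the parabolic equation for $\sigma_{k}$ obtained by contracting the $b_{ij}$-equation of Lemma~\ref{evolutions} with $\sigma_{k}^{ij}$. Using $\sigma_{k}^{ij}b_{ij}=k\sigma_{k}$, the antisymmetric cancellation $\sigma_{k}^{ij}(\sigma_{k}^{ip}b_{jp}-\sigma_{k}^{jp}b_{ip})=0$, and the identity $\sigma_{k}^{pq}\nabla_{pq}h=k\sigma_{k}-h\sigma_{k}^{ii}$, one obtains a parabolic inequality of the form
\[ \partial_{t}F - \eta N\,\sigma_{k}^{pq}\nabla_{pq}F \;\geq\; F\,\Psi(x,t) + (\text{concavity-type terms}), \]
whose zero-order coefficient $\Psi$ is bounded by the $C^{0}$ and $C^{1}$ estimates on $h$ from Lemma~\ref{C0-estimate} and Corollary~\ref{cor3.2} together with the continuity of $\varphi$ and $f$.

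Next, at a spatial minimum $\bar{x}$ of $F(\cdot,t)$ one has $\nabla F=0$ and $\sigma_{k}^{pq}\nabla_{pq}F\geq 0$; the concavity-type terms, namely $-\eta N\,\sigma_{k}^{pq}\sigma_{k}^{ij,mn}\nabla_{p}b_{ij}\nabla_{q}b_{mn}$ and $\eta N\,\sigma_{k}^{ij}\sigma_{k}^{pq,mn}\nabla_{j}b_{pq}\nabla_{i}b_{mn}$, are controlled by the Hessian inequality $\sigma_{k}^{pq,mn}A^{pq}A^{mn}\leq \tfrac{k-1}{k}\sigma_{k}^{-1}(\sigma_{k}^{pq}A^{pq})^{2}$ (a consequence of concavity of $\sigma_{k}^{1/k}$ on the positive cone) combined with the first-order condition $\nabla\log F=0$, which rewrites $\nabla\sigma_{k}$ in terms of $\nabla h$ and $\nabla f$. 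The Newton--Maclaurin inequality $\sigma_{k}^{ii}=k\sigma_{k-1}\geq c_{n,k}\sigma_{k}^{(k-1)/k}$ then reduces the problem at $\bar{x}$ to a Hamilton-type ODI for $F_{\min}(t):=\min_{x}F(\cdot,t)$, from which the uniform lower bound follows by the differentiability-in-the-sense-of-liminf argument of \cite{Hamilton86}, already used in the proof of Lemma~\ref{C0-estimate}.

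The main obstacle is the signing of the anisotropic contribution $\eta\sigma_{k}\sigma_{k}^{ij}\nabla_{ij}N$ and the cross terms $\sigma_{k}^{ij}\nabla_{i}\sigma_{k}\nabla_{j}N$, which are a priori unsigned at a spatial minimum of $F$ (as opposed to a minimum of $\sigma_{k}$ itself). Expanding $N=\varphi(h)h/f$ introduces $\nabla^{2}h$ (equivalently $b_{ij}-h\delta_{ij}$) and $\nabla^{2}f$, and the absorption into the main positive term coming from Newton--Maclaurin must be carried out using $\sigma_{k}^{pq}\nabla_{pq}h=k\sigma_{k}-h\sigma_{k}^{ii}$, the decreasing property of $\varphi$ from the hypothesis, and the $C^{0}/C^{1}$ bounds on $h$, so that only bounded factors multiply the controllable terms. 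Pushing this bookkeeping through under the weak hypotheses of Lemma~\ref{C0-estimate} (positivity and decrease of $\varphi$, no pointwise structural condition on $s(\log\varphi)'(s)$) is the technical heart of the argument.
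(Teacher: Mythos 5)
Your proposal has a genuine gap: it omits the barrier term that is the decisive ingredient in the paper's argument. The paper does \emph{not} run a minimum principle directly on $M=N\sigma_k$ (or on $F=\eta M$), but on the auxiliary function $Q=\log M - A\,\rho^2/2$ with a large constant $A$; this Tso-type trick is essential, not a cosmetic choice. To see why, note that the clean evolution equation for $M$ (obtained exactly as in Lemma~\ref{lem1}, with \emph{no} concavity terms) reads
\begin{equation*}
\partial_t M - N\eta\,\sigma_k^{ij}\nabla_{ij}M = MN\eta\,\sigma_k^{ii} - M\Bigl(k+1+\tfrac{\varphi'h}{\varphi}\Bigr) + \tfrac{M^2}{h}\eta\Bigl(1+\tfrac{\varphi'h}{\varphi}\Bigr).
\end{equation*}
At a spatial minimum of $M$, the Newton--Maclaurin bound you invoke gives $\sigma_k^{ii}\geq c\,\sigma_k^{(k-1)/k}=c\,(M/N)^{(k-1)/k}$, so the good first term is of order $M^{(2k-1)/k}$, which for $k\geq 2$ is $o(M)$ as $M\to 0$, whereas the bad term $-M(k+1+\varphi'h/\varphi)$ is linear in $M$. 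The resulting ODI for $M_{\min}(t)$ has a negative leading behaviour as $M_{\min}\to 0$ and does \emph{not} produce a uniform lower bound (only exponential decay). What rescues the argument is that at a minimum of $Q$ the barrier contributes an additive term $A\rho^2\geq AB>0$ that is independent of $M$; choosing $A$ large enough this constant beats the bounded bad coefficient $-(k+1+\varphi'h/\varphi)$, while every $M$-dependent term on the right-hand side of the $Q$-evolution is $O(e^{Q})$ and therefore negligible once $Q$ is very negative. Without the barrier your ``Hamilton-type ODI'' simply does not close.

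A secondary point: you route the evolution of $\sigma_k$ through the $b_{ij}$-equation of Lemma~\ref{evolutions} contracted with $\sigma_k^{ij}$ and then propose to control the resulting second-order concavity terms via the Hessian inequality for $\sigma_k^{1/k}$. In fact, if you carry out that contraction and then rewrite $\sigma_k^{ij}\sigma_k^{pq}\nabla_{pq}b_{ij}=\sigma_k^{pq}\nabla_{pq}\sigma_k - \sigma_k^{pq}\sigma_k^{ij,mn}\nabla_pb_{ij}\nabla_qb_{mn}$, the two concavity terms cancel identically, so the Hessian inequality and Newton--Maclaurin are not needed for this lemma at all. The paper sidesteps the whole issue by reusing the identity $\partial_t\sigma_k=\sigma_k^{ij}\nabla_{ij}F+\sigma_k^{ii}F-k\sigma_k$ already established in the proof of Lemma~\ref{lem1}, which comes straight from $\partial_t h = F - h$ and produces no curvature-derivative squares. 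Your acknowledged difficulty with ``signing the anisotropic contribution $\eta\sigma_k\sigma_k^{ij}\nabla_{ij}N$'' is real for the bare minimum principle, and the barrier is precisely what resolves it: in the $Q$-equation those terms are absorbed into the $M$-small regime.
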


\begin{proof}
Consider the auxiliary function  $ Q = \log M - A\frac{\rho^{2}}{2}, $ where
 $M = N\sigma_{k} = \frac{1}{f(x)}\varphi(h)h\sigma_{k}$ and $A$ is a positive constant to be determined later.
The evolution equation of $M$ is given by
\begin{equation*}
 \begin{split}
\partial_{t}M & =   N \partial_{t}\sigma_{k} + \sigma_{k} \partial_{t}N\\
& = N(\sigma_{k}^{ij}\nabla_{ij}F + \sigma_{k}^{ij}\delta_{ij}F - k\sigma_{k}) + \frac{M}{h}\left(1+ \frac{\varphi'h}{\varphi} \right)(F - h)\\
& = N\sigma_{k}^{ij}\nabla_{ij}F  + N\sigma_{k}^{ij}\delta_{ij}F - kM + \frac{M^{2}}{h}\eta(t)\left(1+ \frac{\varphi'h}{\varphi}  \right) - M\left(1+ \frac{\varphi'h}{\varphi} \right)\\
& = N\eta(t)\sigma_{k}^{ij}\nabla_{ij}M  + MN\eta(t)\sigma_{k}^{ij}\delta_{ij} - M\left(k + 1 + \frac{\varphi'h}{\varphi} \right) + \frac{M^{2}}{h}\eta(t)\left(1+ \frac{\varphi'h}{\varphi}  \right).
\end{split}
 \end{equation*}
It is easy to compute
\beqs \begin{split}
\nabla_{i} Q &= \frac{\nabla_{i}M }{M} - A\nabla_{i}\left(\frac{\rho^{2}}{2}\right) ,\\
\nabla_{ij}Q &= \frac{\nabla_{ij}M }{M} - \frac{1}{M^{2}}\nabla_{i}M \nabla_{j}M  - A\nabla_{ij} \left(\frac{\rho^{2}}{2}\right).\end{split}\eeqs
Due to the evolution equation of $\dfrac{\rho^{2}}{2}$ in Lemma \ref{evolutions}, the evolution equation of $Q$ is
  \begin{equation*}
 \begin{split}
& \partial_{t}Q - N\eta(t)\sigma_{k}^{ij}\nabla_{ij}Q \\
& =  \frac{1}{M^{2}}N\eta(t)\sigma_{k}^{ij}\nabla_{i}M \nabla_{j}M  + N\eta(t)\sigma_{k}^{ij}\delta_{ij} - \left(k+1 + \frac{\varphi'h}{\varphi} \right) + \frac{M}{h}\eta(t)\left(1 + \frac{\varphi'h}{\varphi} \right)\\
& - (k+1)AhN\eta(t)\sigma_{k} + A\rho^{2} - A\eta(t)\sigma_{k}\nabla_{i}h \nabla_{i}N  + AN\eta(t)\sigma_{k}^{ij}b_{mi}b_{mj}.
\end{split}
 \end{equation*}
 For fixed $t$, at a point where $Q$ attains its spatial minimum, we have
  \begin{equation*}
 \begin{split}
 \partial_{t}Q & \geq  A\rho^{2}- \left(k+1 + \frac{\varphi'h}{\varphi} \right) + \frac{M}{h}\eta(t)\left(1 + \frac{\varphi'h}{\varphi} \right) \\
 & - (k+1)AhN\eta(t)\sigma_{k}  - A\eta(t)\sigma_{k}\nabla_{i}h \nabla_{i}N\\
 & = \frac{1}{2}A\rho^{2}- \left(k+1 + \frac{\varphi'h}{\varphi} \right) + \frac{1}{h}e^{Q + A\frac{\rho^{2}}{2}}\eta(t)\left(1 + \frac{\varphi'h}{\varphi}  \right)\\
 &+ AN\eta(t)\sigma_{k}\left( \frac{\rho^{2}}{2e^{Q + A\frac{\rho^{2}}{2}} \eta(t)} - h(k+1) - \frac{1}{N}\nabla_{i}h \nabla_{i}N\right).
\end{split}
 \end{equation*}
 Now we choose $A > \dfrac{2}{\rho^{2}}(k+1)$. Notice that $\varphi$ is a monotonic decreasing, positive function and we have obtained  uniform bounds of $h, \rho, |\nabla h|$ and $\eta(t)$. If $Q$ is negatively large enough, the right-hand side is positive and the lower bound of $Q$ follows.
\end{proof}

\begin{lemma}\label{sigma-upper-estimate}   Under the assumptions of Lemma \ref{C0-estimate},
 $$\sigma_{k} \leq C, \quad \forall (x,t) \in \mathbb{S}^{n-1} \times [0, T),$$
where $C$ is a positive constant independent of $t$.
\end{lemma}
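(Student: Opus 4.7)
The plan is to construct an auxiliary function that mirrors the one used in Lemma \ref{sigma-lower-estimate} but produces a coercive \emph{negative} term when $\sigma_{k}$ is large. A natural candidate is
\[
 Q \;=\; \log M \;+\; \tfrac{A}{2}\rho^{2} \;-\; \log h,
\]
for a small constant $A>0$ to be chosen. In view of Lemma \ref{C0-estimate} and the definition $M = \varphi(h)h\sigma_{k}/f$, an upper bound on $Q$ is equivalent to the desired upper bound on $\sigma_{k}$.

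First I would compute $\partial_{t}Q - N\eta\,\sigma_{k}^{ij}\nabla_{ij}Q$ by combining the evolution of $M$ (already derived in the proof of Lemma \ref{sigma-lower-estimate}), the evolution of $\rho^{2}/2$ from Lemma \ref{evolutions}, and the elementary computation for $\log h$ using $\sigma_{k}^{ij}\nabla_{ij}h = k\sigma_{k} - h\sigma_{k}^{ij}\delta_{ij}$. The decisive role of the term $-\log h$ is that its diffusion contributes $-N\eta\,\sigma_{k}^{ij}\delta_{ij} = -(n-k)N\eta\,\sigma_{k-1}$, exactly cancelling the positive $+N\eta\,\sigma_{k}^{ij}\delta_{ij}$ produced by the $\log M$ evolution. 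Without this cancellation one would be stuck with a $\sigma_{k-1}$ term, which cannot be controlled by $\sigma_{k}$ a priori.

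At a spatial maximum of $Q$, the gradient identity $\nabla_{i}M/M = \nabla_{i}h/h - A\nabla_{i}(\rho^{2}/2)$ together with $\nabla_{i}(\rho^{2}/2) = \nabla_{m}h\cdot b_{mi}$ shows, in an eigenframe of $b_{ij}$, that the three terms
\[
 \tfrac{N\eta}{M^{2}}\sigma_{k}^{ij}\nabla_{i}M\nabla_{j}M \;-\; N\eta\,\sigma_{k}^{ij}\tfrac{\nabla_{i}h\nabla_{j}h}{h^{2}} \;-\; A\,N\eta\,\sigma_{k}^{ij}b_{mi}b_{mj}
\]
combine into an explicit quadratic form in $(A\lambda_{i}-1/h)$ that is bounded above by $-\tfrac{A}{2}N\eta\,\sigma_{k}^{ij}b_{mi}b_{mj}$ once $A$ is small enough compared with $|\nabla h|_{\max}^{-2}$ (controlled by Corollary \ref{cor3.2}). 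Invoking Maclaurin's inequality $\sigma_{k}^{ij}b_{mi}b_{mj} \geq c(n,k)\,\sigma_{k}^{1+1/k}$ then produces the coercive term $-C A\,\sigma_{k}^{1+1/k}$ in the right-hand side.

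All remaining terms in the right-hand side grow at most linearly in $\sigma_{k}$: specifically $A(k+1)h N\eta\sigma_{k}$, $(k-1)M\eta/h$, $\frac{M\eta}{h}(1+\varphi'h/\varphi)\leq 0$, $A\eta\sigma_{k}\nabla_{i}h\nabla_{i}N$, together with bounded constants coming from $-(k+1+\varphi'h/\varphi)$, $1$, $-A\rho^{2}$, all of which are controlled by Lemmas \ref{C0-estimate} and \ref{eta-estimate} and the hypothesis $-a\leq s\varphi'/\varphi\leq -1$. Since $\sigma_{k}^{1+1/k}$ dominates $\sigma_{k}$ for large $\sigma_{k}$, the right-hand side becomes strictly negative once $\sigma_{k}$ exceeds a threshold depending only on $A$ and the a priori constants. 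The parabolic maximum principle (as in Lemma \ref{sigma-lower-estimate}, e.g. \cite{Hamilton86}) then yields $Q\leq C$, hence $\sigma_{k}\leq C$. The hardest part is arranging for the $\sigma_{k-1}$ contribution to vanish: it is this requirement that fixes the coefficient of $\log h$ in $Q$, and a secondary subtlety is the perfect-square identification that lets the gradient-squared term be absorbed into $-AN\eta\sigma_{k}^{ij}b_{mi}b_{mj}$ for small $A$.
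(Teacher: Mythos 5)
Your proposal is correct and follows essentially the same approach as the paper: the paper's auxiliary function $P = \dfrac{M/h}{1-B\rho^{2}/2}$ is, up to taking a logarithm and replacing the factor $(1-B\rho^{2}/2)^{-1}$ by $e^{A\rho^{2}/2}$, the same object as your $e^{Q}$, and both arguments hinge on dividing by $h$ to remove the $\sigma_{k}^{ij}\delta_{ij}$ contribution and on the inverse-concavity bound $\sigma_{k}^{ij}b_{im}b_{jm}\geq k\,\sigma_{k}^{1+1/k}$ to produce the coercive superlinear term before applying the maximum principle. Your write-up is, if anything, more explicit about \emph{why} the $h^{-1}$ factor is structurally forced (the $\sigma_{k-1}$-type term is not a priori controllable by $\sigma_{k}$), a point the paper uses implicitly without comment.
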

\begin{proof}
By Lemma \ref{C0-estimate}, there exists a positive constant $B$ such that
\beqs B < \rho^{2} < \frac{1}{B}\eeqs for all $t > 0.$
Define
\beqs P(x,t) = \dfrac{\varphi\sigma_{k}}{f(1 - \frac{ B\rho^{2} }{2})} = \frac{M}{h}\frac{1}{1 - \frac{B \rho^{2} }{2}}.\eeqs
By the evolution equation of $M$ in Lemma \ref{sigma-lower-estimate},
we have
\begin{equation*}
\partial_{t}\frac{M}{h}  -  N\eta(t)\sigma_{k}^{ij}\nabla_{ij}\frac{M}{h} =  - \frac{M}{h} \left(k  + \frac{\varphi'h}{\varphi} \right)
 +  \frac{M^{2}}{h^{2}}\eta(t)\left(k + \frac{\varphi'h}{\varphi} \right) + \frac{2N}{h}\eta(t)\sigma_{k}^{ij}\nabla_{i}h\nabla_{j}\frac{M}{h}.
 \end{equation*}
Hence
\begin{equation*}
\begin{split}
& \partial_{t}P -  N\eta(t)\sigma_{k}^{ij}\nabla_{ij}P\\
& = \frac{1}{1 - \frac{ B\rho^{2} }{2}}\left[- \frac{M}{h} \left(k  + \frac{\varphi'h}{\varphi} \right)
 +  \frac{M^{2}}{h^{2}}\eta(t)\left(k + \frac{\varphi'h}{\varphi}  \right) + \frac{2N}{h}\eta(t)\sigma_{k}^{ij}\nabla_{i}h\nabla_{j}\frac{M}{h}\right]\\
 &+ \frac{MB}{h(1 - \frac{ B\rho ^{2}}{2}) ^{2}} \left[ (k+1)Nh\eta(t)\sigma_{k}
 - \rho^{2} + \eta(t)\sigma_{k}\nabla_{i}h\nabla_{i}N - N\eta(t)\sigma_{k}^{ij}b_{mi}b_{mj}\right]\\
 & -  \frac{2B}{1 - \frac{ B\rho ^{2}}{2}}  N\eta(t)\sigma_{k}^{ij}\nabla_{i}\frac{ \rho^{2} }{2}\nabla_{j}P.
\end{split}
\end{equation*}
At a point where $P(\cdot,t)$ attains its maximum, we have
\begin{equation*}
\nabla_{j}\frac{M}{h} = -\frac{M}{h}\frac{B\nabla_{j}\frac{ \rho^{2} }{2}}{1 - \frac{ B\rho ^{2}}{2}}
= -\frac{M}{h}\frac{B b_{jm} h_{m}}{1 - \frac{ B\rho ^{2}}{2}}.
\end{equation*}
Due to the inverse concavity of  $(\sigma_{k})^{\frac{1}{k}}$, we have from Lemma 5 in \cite{AMZ2013},
\beqs \left((\sigma_{k})^{\frac{1}{k}}\right)^{ij}b_{im}b_{jm} \geq (\sigma_{k})^{\frac{2}{k}},\eeqs
which means
\beqs \sigma_{k}^{ij}b_{im}b_{jm} \geq k (\sigma_{k})^{1+\frac{1}{k}}.\eeqs
Then at the  point    where $P(\cdot,t)$ attains its maximum, we have
\begin{equation*}
\begin{split}
& \partial_{t}P
 \leq \frac{1}{1 - \frac{ B\rho^{2} }{2}}\left[- \frac{M}{h} \left(k  + \frac{\varphi'h}{\varphi} \right)
 +  \frac{M^{2}}{h^{2}}\eta(t)\left(k + \frac{\varphi'h}{\varphi}\right)  \right] \\
 &+ \frac{M}{h}\frac{B}{\left(1 - \frac{B \rho ^{2}}{2}\right) ^{2}} \left[ (k+1)Nh\eta(t)\sigma_{k}
 - \rho^{2} + \eta(t)\sigma_{k}\nabla_{i}h\nabla_{i}N - kN\eta(t)(\sigma_{k})^{1+\frac{1}{k}}\right].
\end{split}
\end{equation*}
From Lemmas \ref{C0-estimate} and \ref{eta-estimate}, we have
\begin{equation*}
 \partial_{t}P
 \leq c_{1}P +  c_{2}P^{2} - c_{3}P^{2 + \frac{1}{k}}.
\end{equation*}
By maximum principle, we see that $P(x,t)$ is uniformly bounded from above. The upper bound of
$\sigma_{k}$ follows from the uniform bounds on $h$ and $\rho$.
\end{proof}

Now we can derive the  upper bounds of principal curvatures $\kappa_{i}(x,t)$ of $M_t$
for $i=1,\cdots, n-1$.

\begin{lemma}\label{C2-estimate-2}
   Under the assumptions of Theorem \ref{thm1},
we have $$ \kappa_{i} \leq C, \quad \forall (x,t) \in \mathbb{S}^{n-1} \times [0, T),$$
where $C$ is a positive constant independent of $t$.
\end{lemma}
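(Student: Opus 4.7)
The plan is to bound the largest eigenvalue $\lambda_{\max}$ of the matrix $b^{ij}$ uniformly from above, which is equivalent to the claimed upper bound on the principal curvatures. I will run a standard test-function argument on an auxiliary function of the form
\beqs
W(x,t) = \log \lambda_{\max}\bigl(b^{ij}(x,t)\bigr) + \alpha\log f(x) - A\Psi\bigl(h(x,t)\bigr),
\eeqs
where $\alpha$, $A$ are constants to be calibrated in terms of $k$ and $a$, and $\Psi$ is a smooth function of $h$. Fix $t_0 \in (0,T)$; at the spatial maximum $(x_0,t_0)$ of $W(\cdot,t_0)$, by rotating the frame we may assume that $b^{ij}$ is diagonal at $(x_0,t_0)$ with $b^{11}=\lambda_{\max}$, and replace $\log\lambda_{\max}$ locally by $\log b^{11}$.

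Applying $\partial_t - N\eta\sigma_k^{pq}\nabla_{pq}$ to $W$ and using the evolution equation of $b^{ij}$ from Lemma \ref{evolutions}, the dominant good term is $-(k+1)N\eta\sigma_k b^{11}$, while the principal bad term involves $\nabla_{11}N$ with $N=\varphi(h)h/f$. Expanding this via $\nabla_{pq}h=b_{pq}-h\delta_{pq}$ and using $\nabla W = 0$ at $(x_0,t_0)$ to eliminate the first derivatives of $b^{11}$ in favor of first derivatives of $h$ and $\log f$, the bad term splits into an $\nabla_{pq}(1/f)$ contribution and several $\varphi$-contributions. The hypothesis $-a \leq s\varphi'/\varphi \leq -1$ pins down the correct homogeneity, forcing the exponent $1/(k+a)$ in the convexity condition on $f$, while the monotonicity of $s\partial_s\log\varphi$ (i.e.\ the concavity of $\log\varphi$ in $\log s$) controls the second-order $\varphi$-terms, which are then absorbed by a suitable choice of $\Psi$. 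The $\sigma_k^{pq,mn}$ cross term is handled as in the proof of Lemma \ref{sigma-upper-estimate}, via the inverse concavity of $\sigma_k^{1/k}$.

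After this reorganization the leading terms combine into a multiple of
\beqs
-N\eta\sigma_k (b^{11})^{2}\, f^{\frac{1}{k+a}}\bigl[(k+1)f^{-\frac{1}{k+a}}\delta_{11} + (k+a)\nabla_{11}\bigl(f^{-\frac{1}{k+a}}\bigr)\bigr],
\eeqs
which by the convexity assumption of Theorem \ref{thm1} has a definite negative sign with coefficient bounded below by a positive constant times $(b^{11})^{2}$. Combined with the uniform bounds on $h$, $\rho$, $|\nabla h|$, $\eta$, and $\sigma_k$ from Lemmas \ref{C0-estimate}--\ref{sigma-upper-estimate}, the weak maximum principle of Hamilton then forces $b^{11}(x_0,t_0) \leq C$, which through $W(x,t_0) \leq W(x_0,t_0)$ yields a uniform upper bound on $\lambda_{\max}(b^{ij})$ everywhere. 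The main obstacle is the middle step: bookkeeping the derivatives of $N = \varphi(h)h/f$ so that the precise combination $(k+1)f^{-1/(k+a)}e_{ij} + (k+a)\nabla_{ij}(f^{-1/(k+a)})$ emerges. The sharpness of the range $-a \leq s\varphi'/\varphi \leq -1$ and the monotonicity of $s\partial_s\log\varphi$ are crucial for this matching, as can be sanity-checked in the power case $\varphi(s)=s^{1-p}$ of the remark after Corollary \ref{coreq}, where $a$ reduces to $p-1$ and the hypothesis specializes to the classical convexity condition for the $L_p$-Christoffel-Minkowski problem.
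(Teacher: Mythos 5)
Your proposal targets the right final expression and invokes the correct ingredients (inverse concavity of $\sigma_k^{1/k}$, Schwartz to balance the $\nabla_1\sigma_k\nabla_1 N$ cross term, the convexity of $f^{-1/(k+a)}$, and the one-sided bounds and monotonicity of $s\,\partial_s\log\varphi$), but what you have written is an outline rather than a proof, and you yourself flag the crucial step as unresolved. The gap is real: the whole content of the lemma is to make the combination $\nabla_{11}N - \tfrac{k}{k+1}(\nabla_1 N)^2/N + (k+1)N$ produce a sign from the convexity hypothesis, and you do not actually carry out that reorganization. The paper does it via a clean algebraic identity that you never mention:
\beqs
\nabla_{11}N - \frac{k}{k+1}\frac{(\nabla_1 N)^2}{N} + (k+1)N
= (k+1)N^{\frac{k}{k+1}}\bigl(N^{\frac{1}{k+1}} + (N^{\frac{1}{k+1}})_{\tau\tau}\bigr),
\eeqs
with $\tau$ the arc-length of the great circle in the $e_1$ direction. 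Expanding the right side for $N=\varphi(h)h/f$, completing a square that isolates the cross term $h_\tau (f^{-1})_\tau$, and then invoking $1+\tfrac{\varphi'h}{\varphi}\le 0$, $(\tfrac{\varphi's}{\varphi})'\ge 0$, and $-a\le \tfrac{\varphi'h}{\varphi}$ is precisely where the exponent $1/(k+a)$ and the positive constant $c_f$ come out. Without exhibiting this identity (or an equivalent reorganization) the proof does not close.

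Two more specific points. First, the paper uses the simple test function $b^{11}/h$ (equivalently $\log b^{11}-\log h$), with no extra $\alpha\log f$ or tunable $\Psi$; the $f$- and $\varphi$-dependence is extracted entirely from the second-derivative expansion of $N^{1/(k+1)}$, not from a calibrated auxiliary. In particular, your claim that the second-order $\varphi$-term is ``absorbed by a suitable choice of $\Psi$'' is not what happens: that term is $\tfrac{h_\tau^2}{(k+1)h}\bigl(1+\tfrac{\varphi'h}{\varphi}\bigr)'$ and is discarded as nonnegative directly from the monotonicity hypothesis, with no $\Psi$ involved. Second, using $\nabla W=0$ at the maximum gives control only of $\nabla_i b^{11}$, hence $\nabla_i b_{11}$ in the diagonal frame, not of $\nabla_1 b_{pq}$ for $p,q\neq 1$; the full set of third-order terms is still disposed of only by inverse concavity plus Schwartz (and any spare first-derivative terms, like $\nabla_1 N$, are already uniformly bounded by Lemma \ref{C0-estimate} and Corollary \ref{cor3.2}, so this step buys nothing). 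If you replace the vague ``suitable $\Psi$, suitable $\alpha$'' by the explicit choice $\log b^{11}-\log h$ and insert the identity above, the argument becomes the paper's.
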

\begin{proof}
By rotation, we assume that the maximal eigenvalue of $b^{ij}$ at $t$  is attained at point $x_{t}$ in the direction
 of the unit vector $e_{1} \in T_{x_{t}} \uS$. We also choose orthonormal vector field such that $b^{ij}$ is diagonal.
 By the evolution equation of $b^{ij}$ in Lemma \ref{evolutions}, we get
\begin{equation*}
\begin{split}
& \partial_{t}\frac{b^{11}}{h} -  N\eta(t)\sigma_{k}^{pq}\nabla_{pq}\frac{b^{11}}{h}  \\
& = \frac{2}{h}N\eta(t)\sigma_{k}^{pq} \nabla_{p}\frac{b^{11}}{h}
\nabla_{q}h + \frac{N}{h^{2}}\eta(t) b^{11}\sigma_{k}^{pq}\nabla_{pq}h
\\
&- (k+1) \frac{N}{h}\eta(t)\sigma_{k}(b^{11})^{2} +  \frac{N}{h}\eta(t)\sigma_{k}^{pq}\delta_{pq}b^{11} \\
& - \frac{N}{h}\eta(t)(b^{11})^{2}(\sigma_{k}^{pq,mn} + 2\sigma_{k}^{pm}b^{nq})\nabla_{1}b_{pq}\nabla_{1}b_{mn}\\
& -\frac{\eta(t)}{h}(b^{11})^{2}( \nabla_{11}N\sigma_{k}  + 2\nabla_{1}\sigma_{k}\nabla_{1}N
) - \frac{b^{11}}{h^{2}}N\eta(t)\sigma_{k}+ \frac{2b^{11}}{h}\\
&= \frac{2}{h}N\eta(t)\sigma_{k}^{pq} \nabla_{p}\frac{b^{11}}{h}
\nabla_{q}h - (k+1) \frac{N}{h}\eta(t)\sigma_{k}(b^{11})^{2} \\
& - \frac{N}{h}\eta(t)(b^{11})^{2}(\sigma_{k}^{pq,mn} + 2\sigma_{k}^{pm}b^{nq})\nabla_{1}b_{pq}\nabla_{1}b_{mn}\\
& -\frac{\eta(t)}{h}(b^{11})^{2}(\nabla_{11}N\sigma_{k}  + 2\nabla_{1}\sigma_{k}\nabla_{1}N)
+ (k-1) \frac{b^{11}}{h^{2}}N\eta(t)\sigma_{k}+ \frac{2b^{11}}{h}.
\end{split}
\end{equation*}
According to inverse concavity of $(\sigma_{k})^{\frac{1}{k}}$, we obtain by  \cite{Urbas1991} or \cite{AMZ2013}
\beqs
(\sigma_{k}^{pq,mn} + 2\sigma_{k}^{pm}b^{nq})\nabla_{1}b_{pq}\nabla_{1}b_{mn} \geq \frac{k+1}{k}\frac{(\nabla_{1}\sigma_{k})^{2}}{\sigma_{k}}.
\eeqs
On the other hand, by Schwartz inequality, the following inequality holds
\beqs
2|\nabla_{1}\sigma_{k}\nabla_{1}N| \leq \frac{k+1}{k}\frac{N(\nabla_{1}\sigma_{k})^{2}}{\sigma_{k}} + \frac{k}{k+1}\frac{\sigma_{k}(\nabla_{1}N)^{2}}{N}.
\eeqs
Hence,  we have at $(x_{t}, t)$
\begin{equation*}
\partial_{t}\frac{b^{11}}{h}\leq  -\frac{(b^{11})^{2}}{h}\sigma_{k}\eta(t)\left[\nabla_{11}N -
\frac{k}{k+1}\frac{(\nabla_{1}N)^{2}}{N} + (k+1)N  + (1-k)\frac{Nb_{11}}{h}\right]+ \frac{2b^{11}}{h}.
\end{equation*}
Let $\tau$ be the arc-length of the great circle  passing through $x_{t}$ with the unit tangent vector $e_{1}$.
Notice that
\begin{equation*}
\nabla_{11}N -\frac{k}{k+1}\frac{(\nabla_{1}N)^{2}}{N} + (k+1)N = (k+1)N^{\frac{k}{k+1}}\left(N^{\frac{1}{k+1}} + (N^{\frac{1}{k+1}})_{\tau\tau}\right).
\end{equation*}
Since
\begin{equation*}
\begin{split}
N_{\tau} & = (f^{-1})_{\tau} \varphi h + f^{-1} \varphi h_{\tau}\left(1+ \frac{\varphi'h}{\varphi} \right)\\
N_{\tau\tau} & = (f^{-1})_{\tau\tau} \varphi h + 2(f^{-1})_{\tau} \varphi h_{\tau}\left(1+ \frac{\varphi'h}{\varphi} \right) + f^{-1} \varphi'h_{\tau}^{2}\left(1 + \frac{\varphi'h}{\varphi} \right)\\
&+ f^{-1} \varphi h_{\tau\tau}\left(1 + \frac{\varphi'h}{\varphi} \right) + f^{-1} \varphi h_{\tau}^{2}\left(1+ \frac{\varphi'h}{\varphi} \right)',
\end{split}
\end{equation*}
here $f^{-1}$ is  $\dfrac{1}{f}$.

We have by direct computations
\begin{equation*}
\begin{split}
& 1+ N^{-\frac{1}{k+1}}\left(N^{\frac{1}{k+1}}\right)_{\tau\tau}  \\
 &= 1 + \frac{1}{k+1}N^{-1}N_{\tau\tau} - \frac{k}{(k+1)^{2}}N^{-2}N_{\tau}^{2}\\
&= 1 + \frac{1}{k+1}f(f^{-1})_{\tau\tau} + \frac{2f}{(k+1)h}(f^{-1})_{\tau}h_{\tau}\left(1+ \frac{\varphi'h}{\varphi} \right) \\
& + \frac{\varphi'}{(k+1)\varphi h}h_{\tau}^{2}\left(1+ \frac{\varphi'h}{\varphi} \right) + \frac{h_{\tau\tau}}{(k+1)h}\left(1+ \frac{\varphi'h}{\varphi} \right)\\
& + \frac{h_{\tau}^{2}}{(k+1)h}\left(1+ \frac{\varphi'h}{\varphi} \right)' - \frac{k}{(k+1)^{2}}f^{2}(f^{-1})_{\tau}^{2}\\
& - \frac{2kf}{(k+1)^{2}h}\left(1+ \frac{\varphi'h}{\varphi} \right)(f^{-1})_{\tau}h_{\tau} - \frac{kh_{\tau}^{2}}{(k+1)^{2}h^{2}}\left(1+ \frac{\varphi'h}{\varphi} \right)^{2}\\
 &= 1 + \frac{1}{k+1}f(f^{-1})_{\tau\tau} + \frac{2f}{(k+1)^{2}h}(f^{-1})_{\tau}h_{\tau}\left(1+ \frac{\varphi'h}{\varphi} \right) \\
&  + \frac{h_{\tau\tau}}{(k+1)h}\left(1+ \frac{\varphi'h}{\varphi} \right) + \frac{h_{\tau}^{2}}{(k+1)h}\left(1+ \frac{\varphi'h}{\varphi} \right)' - \frac{k}{(k+1)^{2}}f^{2}(f^{-1})_{\tau}^{2}\\
&  +\frac{h_{\tau}^{2}}{(k+1)^{2}h^{2}}\left(1+ \frac{\varphi'h}{\varphi} \right)\left(\frac{\varphi'h}{\varphi}  - k\right)\\
&= \frac{1 + \frac{\varphi'h}{\varphi}  }{k+1}\frac{h_{\tau\tau} + h}{h} + \frac{h_{\tau}^{2}}{(k+1)h}\left(1+ \frac{\varphi'h}{\varphi} \right)' \\
& -\frac{1+\frac{\varphi'h}{\varphi} }{h(k+1)^{2}}f
\left[h_{\tau}\left(\frac{k - \frac{\varphi'h}{\varphi}  }{fh}\right)^\frac{1}{2} - (f^{-1})_{\tau}\left(\frac{hf}{k - \frac{\varphi'h}{\varphi}  }\right)^\frac{1}{2} \right]^{2}\\
& + \frac{1}{k+1}\left[\left(k - \frac{\varphi'h}{\varphi} \right) - (f^{-1})_{\tau}^{2}f^{2}\left(\frac{k}{k+1} + \frac{1}{k+1} \frac{1+\frac{\varphi'h}{\varphi} }{\frac{\varphi'h}{\varphi} - k}\right)
+ (f^{-1})_{\tau\tau}f\right]\\
& \geq \frac{1 + \frac{\varphi'h}{\varphi}  }{k+1}\frac{h_{\tau\tau} + h}{h}  \\
& + \frac{1}{k+1}\left[\left(k - \frac{\varphi'h}{\varphi}  \right) - (f^{-1})_{\tau}^{2}f^{2}\left(\frac{k}{k+1} + \frac{1}{k+1} \frac{1+\frac{\varphi'h}{\varphi}}{\frac{\varphi'h}{\varphi}  - k}\right)
+ (f^{-1})_{\tau\tau}f\right],
\end{split}
\end{equation*}
where in the last inequality we use the conditions $\dfrac{\varphi'h}{\varphi}  \leq -1$ and $(\dfrac{\varphi'h}{\varphi} )' \geq 0.$
Since $(k  + 1)f^{-\frac{1}{k + a}}e_{ij} + (k + a)(f^{-\frac{1}{k + a}})_{ij}$ is positive definite and  $
-a \leq \dfrac{\varphi'h}{\varphi}  \leq -1, $ thus we can estimate
\begin{equation*}
\begin{split}
& \left(k - \frac{\varphi'h}{\varphi} \right) - (f^{-1})_{\tau}^{2}f^{2} \frac{k - \frac{\varphi'h}{\varphi}  - 1}{k - \frac{\varphi'h}{\varphi} }
+ (f^{-1})_{\tau\tau}f \\
& \geq k  + 1 - (f^{-1})_{\tau}^{2}f^{2} \frac{k + a - 1}{k + a }
+ (f^{-1})_{\tau\tau}f  \\
& = k  + 1 + (k + a)f^{\frac{1}{k + a}}(f^{-\frac{1}{k + a}})_{\tau\tau}\\
& = f^{\frac{1}{k + a}}\left[(k  + 1)f^{-\frac{1}{k + a}} + (k + a)(f^{-\frac{1}{k + a}})_{\tau\tau}\right]\\
& \geq c_{f},
\end{split}
\end{equation*}
where $c_{f}$ is a positive constant depending on $f$ and the minimal eigenvalue of $(k  + 1)f^{-\frac{1}{k + a}}e_{ij} + (k + a)(f^{-\frac{1}{k + a}})_{ij}$.

Now  we can derive
\begin{equation*}
\partial_{t} \frac{b^{11}}{h}  \leq  -\left(\frac{b^{11}}{h}\right)^{2}N\sigma_{k}\eta(t)( c_{f}h + (2 - a - k)b_{11}) + \frac{2b^{11}}{h}.
\end{equation*}
By the uniform bounds on $h$,  $f$, $\eta$ and $\sigma_{k}$, we conclude
\begin{equation*}
\partial_{t}\frac{b^{11}}{h} \leq  -c_{1}\left(\frac{b^{11}}{h}\right)^{2} + c_{2}\frac{b^{11}}{h}.
\end{equation*}
Here $c_{1}$  and $c_{2}$ are  positive constants independent of  $t$.
The maximum principle then gives the upper bound of  $b^{11} $ and the result follows.
\end{proof}

When $f \equiv 1$,  it can be seen from the proof of Lemma \ref{C2-estimate-2} that the conditions on $f$ and the lower bound of $\dfrac{\varphi'h}{\varphi} $
 can be removed.

\begin{corollary}\label{f=1}
   Under the assumptions of Theorem \ref{thm2},
we have $$ \kappa_{i} \leq C, \quad \forall (x,t) \in \mathbb{S}^{n-1} \times [0, T),$$
where $C$ is a positive constant independent of $t$.
\end{corollary}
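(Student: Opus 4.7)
The plan is to run the proof of Lemma \ref{C2-estimate-2} verbatim for the same auxiliary function $Q := b^{11}/h$, where at the spatial maximizer $x_t$ we choose an orthonormal frame making $b^{ij}$ diagonal with $b^{11}$ its largest eigenvalue, taking advantage of the drastic simplification that occurs when $f\equiv 1$. The evolution inequality
\beqs
\partial_t Q \le -\frac{(b^{11})^2}{h}\sigma_k\eta(t)\Bigl[\nabla_{11}N - \tfrac{k}{k+1}\tfrac{(\nabla_1 N)^2}{N} + (k+1)N + (1-k)\tfrac{Nb_{11}}{h}\Bigr] + \frac{2b^{11}}{h}
\eeqs
and its rewriting via $\nabla_{11}N-\tfrac{k}{k+1}(\nabla_1 N)^2/N+(k+1)N=(k+1)N^{k/(k+1)}\bigl(N^{1/(k+1)}+(N^{1/(k+1)})_{\tau\tau}\bigr)$, both established in the proof of Lemma \ref{C2-estimate-2}, do not use any property of $f$ and carry over unchanged. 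The entire issue is thus to bound $1+N^{-1/(k+1)}(N^{1/(k+1)})_{\tau\tau}$ below.

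Setting $f\equiv 1$ gives $N=\varphi(h)h$ and eliminates every $(f^{-1})_\tau,(f^{-1})_{\tau\tau}$ appearing in the paper's expansion. With $\psi:=h\varphi'(h)/\varphi(h)$ and $h_{\tau\tau}+h=b_{11}$, a direct computation reduces the expansion to
\beqs
1+N^{-\frac{1}{k+1}}(N^{\frac{1}{k+1}})_{\tau\tau} = \frac{(1+\psi)b_{11}}{(k+1)h}+\frac{\psi'h_\tau^{\,2}}{(k+1)h}+\frac{(1+\psi)(\psi-k)h_\tau^{\,2}}{(k+1)^2h^2}+\frac{k-\psi}{k+1}.
\eeqs
The assumptions $\psi\le -1$ and $\psi'\ge 0$ of Theorem \ref{thm2} render the last three summands non-negative (the last one is in fact $\ge 1$), while the first summand is $\le 0$ but only linear in $b_{11}$. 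Plugging into the bracket and combining with $(1-k)Nb_{11}/h$, the full bracket is bounded below by $N(k-\psi)+N(2+\psi-k)\,b_{11}/h$.

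The key observation that removes any need for the lower bound $\psi\ge -a$ used in Theorem \ref{thm1} is that in the chosen frame $b_{11}=1/b^{11}$, so the potentially negative summand $N(2+\psi-k)\,b_{11}/h$ is of order $1/b^{11}$. After multiplication by the prefactor $-(b^{11})^2/h\,\sigma_k\eta(t)$, it contributes only a term linear (and in fact non-negative, because $2+\psi-k\le 1-k\le 0$ combines with the overall minus sign) in $b^{11}$, which is dominated by the genuinely quadratic negative contribution coming from $N(k-\psi)\ge(k+1)N$, whose coefficient is uniformly positive by Lemmas \ref{C0-estimate}, \ref{eta-estimate}, \ref{sigma-lower-estimate}, and \ref{sigma-upper-estimate}. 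One therefore obtains $\partial_t Q\le -c_1 Q^2+c_2 Q$ at a spatial maximum of $Q$ with $c_1,c_2>0$ independent of $t$, and Hamilton's maximum-principle form yields the desired uniform upper bound on $Q$, and hence on $\kappa_i$. The only delicate point is to verify that every appearance of the condition on $f$ or of $\psi\ge -a$ in the proof of Lemma \ref{C2-estimate-2} lies inside a term that vanishes identically when $f\equiv 1$; this is immediate from inspecting the explicit formulas above.
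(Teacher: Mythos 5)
Your argument is correct and coincides with what the paper merely asserts as a remark: the proof of Lemma~\ref{C2-estimate-2} goes through with the $f$-condition and the lower bound on $\varphi'h/\varphi$ dropped once $f\equiv1$. Your explicit computation of $1+N^{-1/(k+1)}(N^{1/(k+1)})_{\tau\tau}$ checks out, the sign analysis under $\psi\le-1$, $\psi'\ge0$ is right, and the crucial observation that the surviving term $N(2+\psi-k)b_{11}/h$ becomes merely linear in $b^{11}$ after multiplication by $(b^{11})^2$ (since $b_{11}b^{11}=1$ in the diagonalizing frame) is exactly what makes the hypothesis $\psi\ge-a$ dispensable.

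One small imprecision, though: your closing sentence claims that every appearance of the condition $\psi\ge-a$ ``lies inside a term that vanishes identically when $f\equiv1$,'' but that is not quite so — the term $N(2+\psi-k)\,b_{11}/h$ does not vanish, and you yourself handle it by a different mechanism a few lines earlier. More importantly, for the resulting linear term's coefficient, $\tfrac{1}{h^{2}}\sigma_k\eta\, N\,(k-\psi-2)$, to be bounded independently of $t$ you need an upper bound on $-\psi=-h\varphi'/\varphi$ along the flow. This is true, but it is not a vacuous point: it follows from Lemma~\ref{C0-estimate} (so $h$ stays in a fixed compact interval $[1/C,C]\subset(0,\infty)$) together with $\varphi\in C^{\infty}(0,\infty)$, which makes $\psi$ continuous hence bounded on $[1/C,C]$. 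That automatic boundedness is precisely what replaces the dropped hypothesis $\psi\ge-a$ and should be stated explicitly rather than subsumed into ``this is immediate from inspecting the explicit formulas.''
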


Combining Lemma \ref{sigma-lower-estimate}, Lemma \ref{sigma-upper-estimate} and Lemma \ref{C2-estimate-2}, we see that
the principal curvatures of $M_{t}$ has uniform positive upper and lower bounds.
This together with Lemma \ref{C0-estimate} and Corollary \ref{cor3.2} implies that the
evolution equation \eqref{seq} is uniformly parabolic on any finite time
interval. Thus, the result of \cite{KS1980} and the standard
parabolic theory show that the smooth solution of \eqref{seq} exists for all
time.
And by these estimates again, a subsequence of $M_t$ converges in $C^\infty$ to
a positive, smooth, strictly convex hypersurface $M_\infty$ in $\R^n$.
To complete the proofs of Theorem \ref{thm1} and Theorem \ref{thm2}, it only needs to check the support
function of $M_\infty$ satisfies Eq. \eqref{eq}.

\section{Convergence of the flow}

By Lemma \ref{lem1}, Lemma  \ref{C0-estimate} and  Lemma  \ref{sigma-upper-estimate}, the functional
\begin{equation*}
V(t)=\int_{\uS} h(x,t)\sigma_{k}(x,t) \dd x
\end{equation*}
  is non-decreasing along the flow and
\begin{equation} \label{eq4.1}
    V(t) \leq C, \quad \forall t\geq 0.
\end{equation}
This tells that
\begin{equation*}
\int_0^t V'(t) \dd t =V(t)-V(0) \leq V(t) \leq C,
\end{equation*}
which leads to
\begin{equation*}
\int_0^\infty V'(t) \dd t  \leq C.
\end{equation*}
This implies that there exists a subsequence of times $t_j\to\infty$ such that
\begin{equation*}
V'(t_j) \to 0 \text{ as } t_j\to\infty.
\end{equation*}

By Lemma \ref{lem1}, we have
\begin{equation*}
  \begin{split}
    (k+1)V'&(t_j) V(t_j) \\
    &= \int_{\uS}\frac{1}{f(x)}\sigma_{k}^{2}(x)h\varphi(h) \dd x\int_{\uS}\frac{h}{\varphi(h)}f(x)\dd x - \left(\int_{\uS}h\sigma_{k}\dd x\right)^{2}.
  \end{split}
\end{equation*}
Since $h$ and $\sigma_{k}$ have uniform positive upper and lower bounds, by
passing to the limit, we obtain
\begin{equation*}
  \int_{\uS}\frac{1}{f(x)}\widetilde{\sigma_{k}}^{2}(x)\varphi(\tilde{h})\tilde{h}\dd x\int_{\uS}\frac{\tilde{h}}{\varphi(\tilde{h})}f(x) \dd x
  = \left(\int_{\uS}\tilde{h}\widetilde{\sigma_{k}}\dd x\right)^{2},
  \end{equation*}
where $\widetilde{\sigma_{k}}$ and $\tilde{h}$ are  the
$k$-th  elementary symmetric function for principal curvature radii and the support function of $M_\infty$.
According to  the equality condition for the H\"older's inequality, there exists a constant
$c \geq 0$ such that
\begin{equation*}
 c\,\varphi(\tilde{h})  \widetilde{\sigma_{k}}(x) = f \text{ on }\uS.
\end{equation*}
Noticing that $\tilde{h}$ and $\widetilde{\sigma_{k}}$ have positive upper and lower
bounds, $c$ should be positive. The proofs of Theorems \ref{thm1} and \ref{thm2} are finished.
\\

{\bf Acknowledgments } The authors would like to thank the anonymous referee for helpful suggestions.






\end{document}